\newtheorem{lemma}{Lemma}[section]
\Crefname{lemma}{Lemma}{Lemmas}
\newtheorem{theorem}[lemma]{Theorem}
\Crefname{theorem}{Theorem}{Theorems}
\newtheorem{cor}[lemma]{Corollary}
\newtheorem{claim}[]{\indent Claim}[section]
\Crefname{section}{Section}{Sections}
\newcounter{tbox}
\begin{document}

\title{Cycles and paths through specified vertices in graphs with a given clique number}

\author[1]{Chengli Li\footnote{ Email: lichengli0130@126.com.}}
\author[2]{Leyou Xu\footnote{Email: leyouxu@m.scnu.edu.cn}}
	
\affil[1]{\small Department of Mathematics, East China Normal University, Shanghai, 200241, China}
\affil[2]{\small School of Mathematical Sciences, South China Normal University, Guangzhou, 510631, China}

\date{}
\maketitle

\begin{abstract}

B. Bollob\'{a}s and G. Brightwell and
independently R. Shi proved the existence of a cycle through all vertices whose degrees at least $\frac{n}{2}$ in any $2$-connected graph of order $n$. Motivated by this result, we prove the existence of a cycle through all vertices whose degrees at least $n-\omega$ in any $2$-connected graph $G$ of order $n$ with clique number $\omega$ unless $G$ is a specific graph. Moreover, we show that for any pair of vertices whose degrees are at least $n-\omega+1$ in a graph $G$ of order $n$ with clique number $\omega$, there exists a path joining them which contains all vertices of degree at least $n-\omega+1$ unless $G$ belongs to certain graph classes. In doing so, we prove the existence of a $(u,v)$-path through all vertices whose degrees at least $\frac{n+1}{2}$ in any graph of order $n$, where $u,v$ are two distinct vertices of degree at least $\frac{n+1}{2}$.
\\
{\bf Keywords: }cycle, path, clique number, minimum degree \\
{\bf Mathematics Subject Classifications:} 05C38, 05C69
\end{abstract}

\section{Introduction}

We only consider finite undirected graphs without loops and multiedges in this paper. 

A Hamilton cycle (path, respectively) of a graph $G$ is a cycle (path, respectively) containing all vertices of $G$. If $G$ contains a Hamilton cycle, then $G$ is said to be hamiltonian. 
Ever since Dirac \cite{Dirac} proved his renowned theorem in 1952, stating that a graph with minimum degree at least $\frac{n}{2}$ is hamiltonian, a great number of related results have emerged. Moreover, the study of hamiltonian problems has become a hot topic in graph theory from then on. For surveys, we refer the reader to \cite{Broersma,Gould,Li}.
Based on Dirac's theorem, Bollobás and Brightwell, as well as Shi independently, discovered the following interesting theorem.

\begin{theorem}{\textup {(Bollob\'{a}s and Brightwell \cite{Bollobas}, Shi \cite{Shi})}}\label{th1}
In a $2$-connected graph of order $n$, there exists a cycle containing all vertices of degree at least $\frac{n}{2}$.
\end{theorem}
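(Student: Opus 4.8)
The plan is an extremal-cycle argument. Write $S=\{u\in V(G):\deg_G(u)\ge n/2\}$, and suppose for contradiction that no cycle of $G$ meets all of $S$. Among all cycles I would choose $C$ with $|V(C)\cap S|$ as large as possible and, subject to that, with $|V(C)|$ as large as possible, and then fix a vertex $v\in S\setminus V(C)$, aiming for a contradiction. (If $|S|\le 2$ the statement is immediate from $2$-connectivity and Menger's theorem, so assume $|S|\ge 3$.) Let $H$ be the component of $G-V(C)$ that contains $v$.

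I would first settle the clean core case $H=\{v\}$. Then all $\deg_G(v)\ge n/2$ neighbours of $v$ lie on $C$, and no two of them are consecutive along $C$: if $uu'\in E(C)$ with $u,u'\in N(v)$, replacing the edge $uu'$ by the path $u\,v\,u'$ yields a cycle $C'$ with $V(C')=V(C)\cup\{v\}$, hence with strictly more vertices of $S$, contradicting the choice of $C$. Since a cycle of length $c$ has no cyclically independent set larger than $\lfloor c/2\rfloor$, we get $|V(C)|\ge 2|N(v)|\ge n$, so $C$ is a Hamilton cycle and contains $v$ — a contradiction. This case is the template, and the degree hypothesis is used exactly at $|V(C)|\ge 2|N(v)|\ge n$.

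For general $H$ I would first replace $v$ by a path. By $2$-connectivity and the fan form of Menger's theorem there is an $(a,b)$-path $P$ with $a,b\in V(C)$ distinct, all internal vertices of $P$ in $H$, and $v$ internal to $P$; choose $P$ with $|V(P)\cap S|$ maximum and, subject to that, with $|V(P)|$ maximum. Let $J_1,J_2$ be the two arcs of $C$ between $a$ and $b$. The cycles $P\cup J_1$ and $P\cup J_2$ both contain $v$, so comparing their $S$-content (and, in case of a tie, their lengths) with that of $C$ forces
\[
|\operatorname{int}(P)\cap S|\ \le\ \min\bigl\{\,|\operatorname{int}(J_1)\cap S|,\ |\operatorname{int}(J_2)\cap S|\,\bigr\};
\]
in particular each arc contains a vertex of $S$ in its interior. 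I would then run the successor/crossover argument on $C_1:=P\cup J_1$: using $\deg_G(v)\ge n/2$, and for pairs of $S$-vertices the Ore-type bound $\deg_G(x)+\deg_G(y)\ge n$, locate a chord or pair of chords enabling a reroute that produces either a longer cycle or one with more vertices of $S$, in each case contradicting the maximality of $C$.

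The hard part is this last step. When $v$ hangs off $C$ through a nontrivial path rather than a single edge, the crisp ``no two consecutive neighbours'' argument must be upgraded to an insertion/absorption argument: the non-$S$ interior vertices of $P$ must be slotted into gaps of the arcs (or one works in a graph obtained by contracting $H$ while preserving $2$-connectivity), and one must keep careful track of which vertices of $S$ on $C$ survive each reroute, so that none is lost. A different route, closer to Shi's original treatment, grows a cycle through $S$ by successive augmentations; either way, the bookkeeping around the fan $P$ is where the real work lies.
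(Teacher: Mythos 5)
There is a genuine gap, and you essentially admit it yourself. First, note that the paper does not prove this statement at all: Theorem~\ref{th1} is quoted from Bollob\'as--Brightwell \cite{Bollobas} and Shi \cite{Shi}, so there is no in-paper argument to compare with, and your proposal must stand on its own. Its easy half does: when the component $H$ of $G-V(C)$ containing $v$ is just $\{v\}$, the ``no two consecutive neighbours'' insertion argument plus the bound $|V(C)|\ge 2|N(v)|\ge n$ is correct and complete. But that case is genuinely easy precisely because all $n/2$ neighbours of $v$ are forced onto $C$; it is not a ``template'' for the general case, where $v$ may have most of its neighbours inside $H$ and the degree hypothesis no longer directly produces many attachment points on $C$.

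The general case is where the theorem lives, and your outline stops exactly there. The inequality $|\operatorname{int}(P)\cap S|\le\min\{|\operatorname{int}(J_1)\cap S|,|\operatorname{int}(J_2)\cap S|\}$ is a fine observation, but the next sentence --- ``locate a chord or pair of chords enabling a reroute that produces either a longer cycle or one with more vertices of $S$'' --- is the entire content of the theorem, not a step. The obvious crossover moves can destroy $S$-vertices on the arc being replaced, the Ore-type bound $d(x)+d(y)\ge n$ for two $S$-vertices does not by itself yield a usable pair of crossing chords (their neighbours may be spread between $C$, $H$, and other components), and contracting $H$ changes degrees so the hypothesis is not preserved. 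Shi's and Bollob\'as--Brightwell's arguments require a careful analysis of how the neighbourhoods of $v$ (or of the $S$-vertices missed by $C$) distribute over the segments of $C$ determined by the attachment vertices of the bridge, together with counting that rules out every reroute failing; none of that is supplied here. As written, the proposal is a plausible plan with its decisive step declared ``the hard part'' and left undone, so it does not constitute a proof.
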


The clique number of a graph $G$ is the maximum cardinality of a clique in $G$.
In this paper, we investigate the exsistence of cycles and paths containing specified vertices, concerning graphs with given order and clique number.
Our main results are as follows.

\begin{theorem}\label{x}
In a $2$-connected graph $G$ of order $n$ with clique number $\omega$, there exists a cycle containing all vertices of degree at least $n-\omega$, unless $G\cong K_{n-\omega}\vee (K_{2\omega-n}\cup \overline{K_{n-\omega}})$ with $\frac{n+1}{2}\le\omega\le n-2$. 
\end{theorem}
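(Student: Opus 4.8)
The plan is to pull out a large clique, dispatch the easy ranges of $\omega$ with Theorem~\ref{th1} or a one‑line hamiltonicity argument, and then attack the remaining range with an extremal‑cycle argument whose rigidity is forced by the clique number. \emph{Reductions.} If $\omega\le n/2$ then $n-\omega\ge n/2$, so $\{v:\deg v\ge n-\omega\}\subseteq\{v:\deg v\ge n/2\}$ and Theorem~\ref{th1} already gives a cycle through all of them. If $\omega\in\{n-1,n\}$ then $G$ is hamiltonian (trivially when $\omega=n$; when $\omega=n-1$, the clique $K_{n-1}$ together with the remaining vertex, which has $\ge 2$ neighbours in the clique by $2$‑connectivity, has a Hamilton cycle), and the Hamilton cycle works. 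So assume $\tfrac{n+1}{2}\le\omega\le n-2$, fix a maximum clique $K$ with $|K|=\omega$, put $R=V(G)\setminus K$ and $r=|R|=n-\omega\in[2,\tfrac{n-1}{2}]$. In this range every vertex of $K$ has degree $\ge\omega-1\ge n-\omega$, so $K\subseteq D:=\{v:\deg v\ge n-\omega\}$; if $D=K$ a Hamilton cycle of $K$ is what we want, so we may assume $D\cap R\ne\varnothing$. Among all cycles containing $K$, choose $C$ maximizing $|V(C)\cap D|$ and, subject to that, $|V(C)|$. If $D\subseteq V(C)$ we are done; otherwise fix $u\in D\setminus V(C)$, note $u\in R$, and set $W=V(G)\setminus V(C)\subseteq R$, so $u\in W$ and $\deg u\ge r$.

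\emph{Case A: $V(C)\cap R=\varnothing$.} Then $V(C)=K$ and, by the secondary maximality, no cycle through $K$ meets $R$. But each $v\in D\cap R$ has at most one neighbour in $K$ (two neighbours $a,b$ would give the cycle $a\,v\,b$ plus a Hamilton path of $K$ from $b$ to $a$), so $\deg v\ge r$ forces $v$ to be adjacent to all of $R\setminus\{v\}$; and by $2$‑connectivity the $K$--$R$ edges cannot all meet a single vertex of $K$ (it would be a cut vertex), so there are edges $\rho_1c_1$ and $\rho_2c_2$ with $\rho_i\in R$, $c_i\in K$, $c_1\ne c_2$. Routing from $\rho_1$ to $\rho_2$ through $u$ inside $R$ and closing with a Hamilton path of $K$ from $c_2$ to $c_1$ yields a cycle through $K$ meeting $R$ — contradiction, so Case A cannot occur.

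\emph{Case B: $V(C)\cap R\ne\varnothing$, hence $|W|\le r-1$.} Now $u$ has at least $r-(|W|-1)\ge 2$ neighbours on $C$, and by the primary maximality no two are consecutive on $C$ (otherwise insert $u$ between them). Writing $N(u)\cap V(C)=\{w_1,\dots,w_t\}$ with successors $w_i^+$ on $C$, each of the standard rerouting moves (using an edge $uw_i^+$ or $w_i^+w_j^+$) would give a cycle through $K$ with vertex set $V(C)\cup\{u\}$, hence all are forbidden and $\{u,w_1^+,\dots,w_t^+\}$ is independent. As $K$ is a clique it meets this set in $\le 1$ vertex, so $t-1\le|R\setminus W|=r-|W|$; with $t\ge r-|W|+1$ this forces equality everywhere: $\deg u=r$, $u$ is complete to $W\setminus\{u\}$, $A:=N(u)\cap V(C)\subseteq K$ with $|A|=r-|W|+1$, and $\{a^+:a\in A\}\supseteq R\setminus W$, and by symmetry $\{a^-:a\in A\}\supseteq R\setminus W$. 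Thus on $C$ every vertex of $R\setminus W$ is flanked by two vertices of $A$, which pins the cyclic order of $C$ down to $a_1\rho_1a_2\rho_2\cdots a_{t-1}\rho_{t-1}a_t[K\setminus A]a_1$ (the vertices of $K\setminus A$ forming one arc). Re‑running the independence lemma from these positions, together with one further surgery relocating some $\rho_i$ across the clique when $|W|\ge 2$, then forces $W=\{u\}$, forces $R$ to be independent with every vertex of $R$ complete to the $r$‑set $A$, and forces $K\setminus A$ to have no neighbour in $R$; that is, $G\cong K_r\vee(K_{\omega-r}\cup\overline{K_r})=K_{n-\omega}\vee(K_{2\omega-n}\cup\overline{K_{n-\omega}})$ with $\tfrac{n+1}{2}\le\omega\le n-2$ (and this graph has no cycle through $D$, as deleting its $r$‑set leaves $r+1$ components). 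This finishes the proof.

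\emph{The main obstacle.} The reductions and Case A are routine. The real difficulty is Case B: turning the single counting inequality all the way into the stated isomorphism, and especially ruling out $|W|\ge 2$, which is not a one‑line count but needs further cycle surgery; the bookkeeping is also complicated by having to remember that vertices of $R\setminus D$ are allowed to stay off the cycle.
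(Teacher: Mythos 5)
Your reductions, your Case A, and the first half of your Case B track the paper's proof closely: the paper also works with an extremal cycle (maximizing the number of vertices of degree at least $n-\omega$ on it), proves the successor/predecessor independence (its Claim~\ref{newc2}), and uses the same tight count to pin the cycle into the comb shape $a_1\rho_1a_2\rho_2\cdots a_{t-1}\rho_{t-1}a_t[K\setminus A]$. Restricting to cycles through $K$ is legitimate here (since $\omega\ge\frac{n+1}{2}$ gives $K\subseteq D$), and it lets you replace the paper's Claims~\ref{newc1} and~\ref{c1} by the outright contradiction of your Case A; that part is sound up to minor slack (you should handle $\rho_1=\rho_2$, which is immediate because a vertex of $R$ with two neighbours in $K$ already yields a cycle through $K$ meeting $R$, against your secondary maximality).

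The genuine gap is the endgame of Case B, which you assert (``re-running the independence lemma \dots\ together with one further surgery \dots\ then forces \dots'') but do not prove, and this is exactly where the paper's real work lies. Three specific arguments are missing. (i) The swap: if some intermediate vertex $\rho_i$ has degree less than $n-\omega$, replace it by $u$ (the cycle $\cdots a_i\,u\,a_{i+1}\cdots$ still contains $K$ and has strictly more vertices of $D$); hence every $\rho_i$ lies in $D$, i.e.\ $d(\rho_i)\ge r$. This swap is what rules out $|W|\ge 2$: your count $n=|V(C)|+|W|=\omega+(t-1)+|W|$ is tight for \emph{every} value of $|W|$ because $t=r-|W|+1$, so no count by itself can do it; but once $N(\rho_i)\subseteq A$ is established, $d(\rho_i)\ge r$ gives $t=|A|\ge r$, hence $|W|\le 1$ --- a one-line count after all, contrary to your closing remark. (ii) Edges from $\rho_i$ to $W\setminus\{u\}$ must be excluded: if $\rho_i w\in E(G)$ with $w\in W\setminus\{u\}$, route $a_i\,u\,w\,\rho_i$ and continue along $C$, using that $u$ is complete to $W\setminus\{u\}$. (iii) Edges from $\rho_i$ to $K\setminus A$ must be excluded, and this is not a naive relocation: you have only one edge from $\rho_i$ into the clique arc, so you cannot simply re-insert $\rho_i$ next to $z\in K\setminus A$; you must also reroute inside the clique, e.g.\ take the cycle $u\,a_{i+1}\rho_{i+1}\cdots a_t\,[K\setminus A\setminus\{z\}]\,z\,\rho_i\,a_i\,\rho_{i-1}\cdots a_1\,u$ (this is essentially the paper's surgery $x x_i\overleftarrow{C}z^{+}x_t^{+}\overrightarrow{C}z x_i^{+}\overrightarrow{C}x_t x$). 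With (i)--(iii) in hand one gets $N(\rho_i)=A$ for all $i$, $W=\{u\}$, and the stated isomorphism exactly as you claim; without them the forcing you announce is unsupported, so the proposal stops short precisely at the decisive step.
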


One sufficent condition for hamiltonian graph follows by Theorem~\ref{x} immediately, which can also be obtianed by a result of Yuan \cite{Yuan}.

\begin{cor}\label{cor1}
Let $G$ be a $2$-connected graph $G$ of order $n$ with minimum degree $\delta$ and clique number $\omega$. If $\delta+\omega\ge n$, then $G$ is hamiltonian unless $G\cong K_{n-\omega}\vee (K_{2\omega-n}\cup \overline{K_{n-\omega}})$ with $\frac{n+1}{2}\le\omega\le n-2$. 
\end{cor}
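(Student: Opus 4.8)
The plan is to read the corollary off Theorem~\ref{x} with essentially no extra work. Under the hypotheses, $G$ is a $2$-connected graph of order $n$ with clique number $\omega$ and $\delta(G)\ge n-\omega$ (since $\delta+\omega\ge n$). Consequently \emph{every} vertex of $G$ has degree at least $n-\omega$, so the set of vertices of degree at least $n-\omega$ is all of $V(G)$; a cycle through all such vertices is then exactly a Hamilton cycle. Theorem~\ref{x} therefore yields that $G$ is hamiltonian unless $G\cong K_{n-\omega}\vee (K_{2\omega-n}\cup \overline{K_{n-\omega}})$ with $\frac{n+1}{2}\le\omega\le n-2$, which is precisely the claimed conclusion.

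The one point worth spelling out is that the exceptional graph is not vacuously excluded, i.e., it genuinely satisfies $\delta+\omega\ge n$ and genuinely fails to be hamiltonian, so the exception is sharp. Writing $H:=K_{n-\omega}\vee (K_{2\omega-n}\cup \overline{K_{n-\omega}})$, the vertices in the $\overline{K_{n-\omega}}$ part are adjacent only to the $n-\omega$ vertices of the joined clique, whence $\delta(H)=n-\omega$ and $\delta(H)+\omega=n$. One then checks that a maximum clique of $H$ is $K_{n-\omega}\vee K_{2\omega-n}$ (using $2\omega-n\ge 1$), so $\omega(H)=\omega$; that $H$ is $2$-connected (as $n-\omega\ge 2$); and that deleting the $n-\omega$ vertices of the joined clique leaves $n-\omega+1>n-\omega$ components, so $H$ has no Hamilton cycle. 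This confirms that the listed family is a genuine exception and cannot be removed.

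There is no real obstacle in this argument: once Theorem~\ref{x} is in hand, the corollary is immediate, the only mild care being the routine verification of the parameters of $H$ above. As the surrounding text notes, the same statement can alternatively be extracted from Yuan's theorem in \cite{Yuan} without invoking Theorem~\ref{x}, but given Theorem~\ref{x} the direct deduction is the cleanest route.
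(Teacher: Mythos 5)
Your deduction is correct and is exactly the route the paper intends: since $\delta+\omega\ge n$ makes every vertex have degree at least $n-\omega$, Theorem~\ref{x} immediately gives the Hamilton cycle outside the exceptional graph, which the paper states without further argument. Your additional verification that $K_{n-\omega}\vee (K_{2\omega-n}\cup \overline{K_{n-\omega}})$ satisfies the hypotheses and is non-hamiltonian is a harmless (and welcome) sharpness check.
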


A $k$-cycle is a cycle of length $k$. In 1971, Bondy~\cite{Bondy} introduced the concept of a pancyclic graph. A graph $G$ of order $n$ is called pancyclic if for every integer $k$ with $3\le k\le n$, $G$ contains a $k$-cycle. Based on Theorem \ref{x}, we can obtain a result stronger than Corollary \ref{cor1}.

\begin{theorem}\label{y}
Let $G$ be a $2$-connected graph $G$ of order $n$ with minimum degree $\delta$ and clique number $\omega$. If $\delta+\omega\ge n$, then $G$ is pancyclic unless $G\cong K_{2,2}$ or $G\cong K_{n-\omega}\vee (K_{2\omega-n}\cup \overline{K_{n-\omega}})$ with $\frac{n+1}{2}\le \omega\le n-2$.
\end{theorem}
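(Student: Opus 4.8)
The plan is to reduce the hamiltonicity part to Corollary~\ref{cor1} and then manufacture all the shorter cycles, splitting into two cases according to whether the clique is large or small compared with $\tfrac n2$.

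\emph{Case $\omega\le\tfrac n2$.} Here the hypothesis gives $\delta\ge n-\omega\ge\tfrac n2$, so $G$ is hamiltonian (Dirac) and has at least $n^2/4$ edges; by Bondy's theorem \cite{Bondy} it is pancyclic or $G\cong K_{n/2,n/2}$. In the exceptional subcase $\omega(G)=2$, and the assumption $\delta+\omega\ge n$ becomes $\tfrac n2+2\ge n$, which forces $n=4$ and $G\cong K_{2,2}$ — one of the listed exceptions. (If $\omega<\tfrac n2$ then $\delta>\tfrac n2$ and $K_{n/2,n/2}$ is impossible, so $G$ is pancyclic outright.) Note that the other exceptional family requires $\omega\ge\tfrac{n+1}2$ and hence cannot occur in this case.

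\emph{Case $\omega\ge\tfrac{n+1}2$.} If $G\cong K_{n-\omega}\vee(K_{2\omega-n}\cup\overline{K_{n-\omega}})$ with $\tfrac{n+1}2\le\omega\le n-2$, we are in one of the listed exceptions (such a $G$ is in fact already non-hamiltonian by Corollary~\ref{cor1}), so assume otherwise; then Corollary~\ref{cor1} supplies a Hamilton cycle $C=u_0u_1\cdots u_{n-1}u_0$, indices taken mod $n$. Fix a clique $K$ of size $\omega$ and set $P=\{i\in\mathbb Z_n:u_i\in K\}$, so $|P|=\omega$ and $2|P|\ge n+1$. For each $d$ with $2\le d\le n-2$, the sets $P$ and $P-d$ must intersect, so there is an index $i$ with $u_i,u_{i+d}\in K$; since $K$ is a clique, $u_iu_{i+d}\in E(G)$, and the $C$-arc $u_iu_{i+1}\cdots u_{i+d}$ closed by this chord is a cycle of length $d+1$ (a genuine cycle, as $2\le d\le n-2$). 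Letting $d$ run over $\{2,\dots,n-2\}$ produces cycles of every length in $\{3,\dots,n-1\}$, while $C$ itself has length $n$; hence $G$ is pancyclic.

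I do not expect a serious obstruction here: the only external ingredient is Bondy's theorem, and it seems genuinely needed in the range $\omega\le\tfrac n2$, since there the clique is too small for the pigeonhole step to reach the long cycles. The real care is combinatorial bookkeeping — verifying that under the degree hypothesis $K_{n/2,n/2}$ degenerates to $K_{2,2}$, that the remaining exception is exactly the graph already excluded by Corollary~\ref{cor1}, and that the small orders ($n\le 4$) behave as claimed so that the index ranges in the pigeonhole argument are meaningful.
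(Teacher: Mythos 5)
Your proof is correct, and in the large-clique case it takes a genuinely different route from the paper. Both arguments handle the small-clique/large-degree regime the same way: hamiltonicity plus $|E(G)|\ge n^2/4$ and Bondy's pancyclicity theorem, with $K_{n/2,n/2}$ degenerating to $K_{2,2}$ under $\delta+\omega\ge n$ (the paper splits on $\delta\ge n/2$ versus $\delta\le\frac{n-1}{2}$, you split on $\omega\le n/2$ versus $\omega\ge\frac{n+1}{2}$; the two splits are interchangeable here). The difference is in the regime $\omega\ge\frac{n+1}{2}$: the paper takes the Hamilton cycle from Corollary~\ref{cor1}, finds consecutive cycle vertices $u_i,u_{i+1}\in K$ with $u_{i-1}\notin K$ so that $d(u_i)+d(u_{i+1})\ge 2\omega-1\ge n$, and then invokes Lemma~\ref{Lemma-Bondy} when the sum exceeds $n$ and Lemma~\ref{Lemma-Faudree} (together with non-bipartiteness) in the boundary case $\omega=\frac{n+1}{2}$, $\delta=\frac{n-1}{2}$. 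You instead argue directly: since $2\omega\ge n+1$, for every $d$ with $2\le d\le n-2$ the index sets $P$ and $P-d$ of clique positions on the Hamilton cycle intersect, yielding a clique chord $u_iu_{i+d}$ and hence a cycle of length $d+1$; together with $C$ this gives all lengths $3,\dots,n$. This pigeonhole construction is sound (the intersection bound $|P\cap(P-d)|\ge 2\omega-n\ge1$ and the distinctness of the $d+1\le n-1$ arc vertices are both fine), and it is more self-contained than the paper's argument, eliminating both Lemma~\ref{Lemma-Bondy} and Lemma~\ref{Lemma-Faudree} and the delicate equality case; the paper's degree-sum route is the one that would survive if one only had a pair of adjacent high-degree vertices rather than a clique occupying more than half the vertices, but for this theorem your version is cleaner. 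The only points to state explicitly are the trivial small cases ($n=3$, where $C$ itself is the needed triangle and the range of $d$ is empty, and the exclusion of $n=2$ by $2$-connectivity when identifying $K_{n/2,n/2}$ with $K_{2,2}$), which you already flag.
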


A $(u,v)$-path in a graph is a path with endpoints $u$ and $v$. Motivated by Theorems \ref{th1} regarding the existence of cycles, we focus on the existence of paths through specified vertices, see Theorem~\ref{Theorem-orepath}. By means of this theorem, we can prove the path version of Theorem \ref{x}, see Theorem~\ref{Theorem-path}. 

\begin{theorem}\label{Theorem-orepath}
Let $G$ be a graph of order $n$ and let $u,v$ be two vertices of degree at least $\frac{n+1}{2}$. Then there exists a $(u,v)$-path of $G$ which contains all vertices of degree at least $\frac{n+1}{2}$.
\end{theorem}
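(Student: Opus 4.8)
The plan is to prove the statement by first reducing to a $2$-connected graph and then running a rotation-type argument on an extremal $(u,v)$-path. Write $A=\{x\in V(G):\deg(x)\ge\frac{n+1}{2}\}$, so that $u,v\in A$ and the goal is a $(u,v)$-path meeting all of $A$. First I would observe that $A$ lies inside a single component: a vertex of degree $\ge\frac{n+1}{2}>\frac n2$ cannot lie in a component of order $\le\frac n2$, so $u$, $v$ and all of $A$ sit in one component $G_0$; since each vertex of $A$ has the same degree in $G_0$ as in $G$ and $|V(G_0)|\le n$, the hypotheses hold for $G_0$, so we may assume $G$ is connected. Next, no cut vertex $x$ can separate two vertices $a,b\in A\setminus\{x\}$: the components of $G-x$ containing $a$ and $b$ would each have more than $\frac n2$ vertices, which is impossible. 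Hence, if $G$ is not $2$-connected, I would take a cut vertex $x$, let $C_1$ be the component of $G-x$ containing $A\setminus\{x\}$ (and at least one of $u,v$), and pass to $G':=G[C_1\cup\{x\}]$: any $(u,v)$-path through $A$ is forced to stay inside $G'$, the degrees in $G'$ of the vertices of $A\cap C_1$ equal their degrees in $G$, and $|V(G')|<n$, so induction on $n$ closes this case. The only delicate point here is when $x\in A$ but $\deg_{G'}(x)$ has fallen below $\frac{|V(G')|+1}{2}$; but in that situation the degree hypothesis forces $C_1$ to be small and the vertices of $A\cap C_1$ to be almost universal in $G'$, and the path can then be exhibited directly.

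Assume now $G$ is $2$-connected. Among all $(u,v)$-paths of $G$, I would choose one, $P=p_0p_1\cdots p_\ell$ with $p_0=u$ and $p_\ell=v$, containing as many vertices of $A$ as possible and, subject to that, of greatest length, and suppose for contradiction that some $w\in A$ is missing from $P$. Every improving move I will use keeps the ends $u,v$ fixed. If $w$ is adjacent to two consecutive vertices $p_i,p_{i+1}$, inserting $w$ between them gives a longer $(u,v)$-path, so $N(w)\cap V(P)$ contains no two consecutive vertices of $P$. If $w\sim p_i$ and $w\sim p_{i+2}$ while $p_{i+1}\notin A$, replacing $p_{i+1}$ by $w$ gives a $(u,v)$-path with more vertices of $A$, so in that configuration $p_{i+1}\in A$. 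Finally, if $w$ has a neighbour off $P$, then by $2$-connectivity there are two paths from $w$ to $V(P)$ meeting only at $w$, one of which we may take to be an edge $wp_i$; detouring the arc of $P$ between $p_i$ and the other landing vertex through $w$ and the off-path vertices yields either a longer $(u,v)$-path or, if the length is unchanged, one meeting more vertices of $A$, unless the bypassed arc is long and already contains a vertex of $A$.

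What remains, and what I expect to be the main obstacle, is to combine these three moves with the sheer size of $N(w)$ to reach a contradiction. Since $w\notin V(P)$, at least $\deg(w)-(n-|V(P)|-1)\ge|V(P)|-\frac{n-3}{2}$ neighbours of $w$ lie on $P$; by the insertion move they form a non-consecutive set $X$, and by the swap move every length-one gap of $X$ is filled by a vertex of $A$, so $X$ is large and the portion of $P$ around it is crowded with vertices of $A$. Picking the landing vertices of the detour so as to straddle a short gap of $X$ then forces an improving move, the bookkeeping being that such a gap is too short for the bypassed arc to be simultaneously longer than the detour and to contain a vertex of $A$. Getting this final count exactly right is the crux, and the genuinely new difficulty compared with the cycle statement (Theorem~\ref{th1}) is the behaviour at the two ends $p_0=u$ and $p_\ell=v$, where the insertion move is unavailable; this is precisely where the extra half in the hypothesis $\deg\ge\frac{n+1}{2}$ is spent, and it is needed, since the bound $\frac n2$ already fails for paths (for instance in $C_4$ with $u,v$ antipodal). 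Everything else is routine.
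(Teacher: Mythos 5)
Your proposal is an outline rather than a proof: the step you yourself identify as the crux --- combining the insertion, swap and detour moves with the size of $N(w)\cap V(P)$ to force an improvement of the extremal $(u,v)$-path --- is never carried out, and that is precisely where all the difficulty of the statement lives (you acknowledge the end-vertex problem at $p_0=u$, $p_\ell=v$, but you do not resolve it). In addition, your reduction to the $2$-connected case has an unproved delicate case: when the cut vertex $x$ lies in $A$ (in particular when $x\in\{u,v\}$), its degree in $G'=G[C_1\cup\{x\}]$ may fall below $\frac{|V(G')|+1}{2}$, so the induction hypothesis no longer applies to it, and the claim that ``$C_1$ is small and the vertices of $A\cap C_1$ are almost universal, so the path can be exhibited directly'' is asserted, not proved. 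As it stands, the argument does not establish the theorem.

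For comparison, the paper avoids both issues entirely. It defines an \emph{ore $(u,v)$-path} (consecutive vertices are either adjacent or have degree sum at least $n+1$) together with its deficit, and proves by a P\'osa/Ore-style crossing-chord exchange that any ore $(u,v)$-path with $d(u)+d(v)\ge n+1$ can be converted into a genuine $(u,v)$-path through the same vertex set: for a non-edge $v_iv_{i+1}$ on a minimum-deficit ore path, either $v_i,v_{i+1}$ have a common neighbour off the path (insert it) or the degree-sum bound yields an index $j$ with $v_jv_i,v_{j+1}v_{i+1}\in E(G)$, and rerouting strictly decreases the deficit. Since any two vertices of degree at least $\frac{n+1}{2}$ have degree sum at least $n+1$, simply listing all such vertices with $u$ first and $v$ last is an ore $(u,v)$-path, and the theorem follows at once --- no connectivity reduction, no extremal-path case analysis, and no end-vertex bookkeeping are needed. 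If you want to salvage your route, the missing counting argument would have to be supplied in full, and the cut-vertex case argued rather than waved at; the deficiency-reduction lemma is the cleaner path.
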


For two graphs $G$ and $H$, we write $G\cup H$ for the disjoint union of $G$ and $H$ and $G\vee H$ for the graph obtained from $G\cup H$ by adding all possible edges between $G$ and $H$.
Let $n$ and $\omega$ be integers with $\frac{n+2}{2}\le \omega\le n-1$. Let $H_1=K_2\vee (K_{\omega-2}\cup K_1)$ and $H_2=K_{1,n-\omega-1}$. Denote by $x$ the vertex of degree two in $H_1$ and by $y$ the vertex of degree $n-\omega-1$ in $H_2$. The coalescence of $H_1$ and $H_2$ at $x$ and $y$, denoted by $H_1xyH_2$, is the graph obtained by identifying $x$ and $y$ from $H_1$ and $H_2$, see Figure~\ref{fig}.
Let $\mathcal{H}(n,\omega)$ be the set of graphs containing $H_1xyH_2$ as a spanning subgraph, which are spanning subgraphs of $K_2\vee (K_{\omega-2}\cup K_{n-\omega})$. Evidently, $H_1xyH_2,K_2\vee (K_{\omega-2}\cup K_{n-\omega})\in \mathcal{H}(n,\omega)$. 

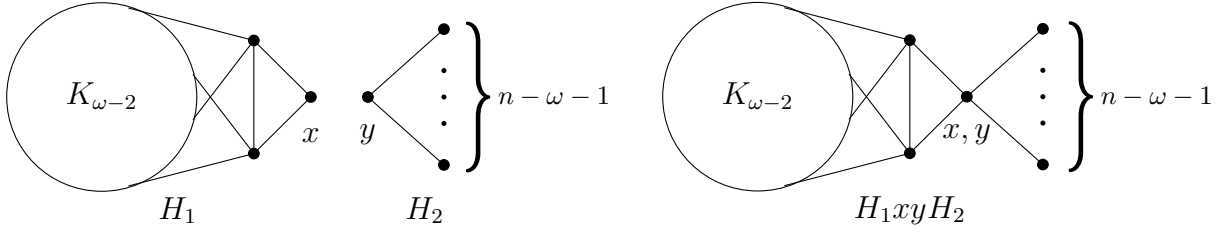
\begin{figure}[htbp]\label{fig}
\centering
\begin{minipage}{0.485\linewidth}
\begin{tikzpicture}
\draw [black](-2,0) circle (1.25);
\node at (-2,0) {$K_{\omega-2}$};
\filldraw [black] (0,0.75) circle (2pt);
\filldraw [black] (0,-0.75) circle (2pt);
\filldraw [black] (0.75,0) circle (2pt);
\draw  [black](0,0.75)--(0.75,0)--(0,-0.75)--(0,0.75);
\draw  [black](-0.8,-0.3)--(0,0.75)--(-1.65,1.18);
\draw  [black](-0.8,0.3)--(0,-0.75)--(-1.65,-1.18);
\node at (0.75,-0.5) {$x$};
\node at (-1,-1.5) {$H_1$};
\filldraw [black] (1.5,0) circle (2pt);
\node at (1.5,-0.5) {$y$};
\filldraw [black] (2.5,0.9) circle (2pt);
\filldraw [black] (2.5,-0.9) circle (2pt);
\filldraw [black] (2.5,0.35) circle (0.75pt);
\filldraw [black] (2.5,0) circle (0.75pt);
\filldraw [black] (2.5,-0.35) circle (0.75pt);
\draw [black](2.5,0.9)--(1.5,0)--(2.5,-0.9);
\node at (2.95,0) {\resizebox{0.45cm}{1.5cm}{$\}$}};
\node at (3.95,0) {\footnotesize $n-\omega-1$};
\node at (2.25,-1.5) {$H_2$};
\end{tikzpicture}
\end{minipage}
\hspace{0.25cm}
\begin{minipage}{0.485\linewidth}
\begin{tikzpicture}
\draw [black](-2,0) circle (1.25);
\node at (-2,0) {$K_{\omega-2}$};
\filldraw [black] (0,0.75) circle (2pt);
\filldraw [black] (0,-0.75) circle (2pt);
\filldraw [black] (0.75,0) circle (2pt);
\filldraw [black] (1.75,0.9) circle (2pt);
\filldraw [black] (1.75,-0.9) circle (2pt);
\draw  [black](0,0.75)--(0.75,0)--(0,-0.75)--(0,0.75);
\draw  [black](-0.8,-0.3)--(0,0.75)--(-1.65,1.18);
\draw  [black](-0.8,0.3)--(0,-0.75)--(-1.65,-1.18);
\draw  [black](1.75,0.9)--(0.75,0)--(1.75,-0.9);
\filldraw [black] (1.75,0.35) circle (0.75pt);
\filldraw [black] (1.75,0) circle (0.75pt);
\filldraw [black] (1.75,-0.35) circle (0.75pt);
\node at (0.75,-0.5) {$x,y$};
\node at (2.25,0) {\resizebox{0.45cm}{1.5cm}{$\}$}};
\node at (3.25,0) {\footnotesize $n-\omega-1$};
\node at (0,-1.5) {$H_1xyH_2$};
\end{tikzpicture}
\end{minipage}
\caption{The graph $H_1$, $H_2$ and $H_1xyH_2$ (from left to right).}
\end{figure}

\begin{theorem}\label{Theorem-path}
Let $G$ be a graph of order $n$ with clique number $\omega$. If $u$ and $v$ are two vertices of degree at least $n-\omega+1$, then there exists a $(u,v)$-path containing all vertices of degree at least $n-\omega+1$, unless $\frac{n+2}{2}\le \omega\le n-1$, and $G\cong K_{n-\omega+1}\vee (K_{2\omega-n-1}\cup \overline{K_{n-\omega}})$  or $G\in\mathcal{H}(n,\omega)$. 
\end{theorem}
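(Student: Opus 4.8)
The plan is to deduce the theorem from Theorem~\ref{Theorem-orepath}: enlarge $G$ to a graph of order $2\omega-1$ in which the relevant degrees reach half the order, apply Theorem~\ref{Theorem-orepath} there, and transfer the resulting path back to $G$. The exceptions surface only in the transfer step.

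\smallskip
\noindent\emph{Easy cases and set-up.}
If $\omega\le\frac{n+1}{2}$, then $n-\omega+1\ge\frac{n+1}{2}$, so every vertex of degree at least $n-\omega+1$ has degree at least $\frac{n+1}{2}$; since $u,v$ are among them, Theorem~\ref{Theorem-orepath} supplies a $(u,v)$-path through all vertices of degree at least $\frac{n+1}{2}$, hence through all vertices of degree at least $n-\omega+1$. If $\omega=n$, then $G=K_n$ and any Hamilton $(u,v)$-path works. So assume $\frac{n+2}{2}\le\omega\le n-1$, and let $W$ be the set of vertices of $G$ of degree at least $n-\omega+1$ (so $u,v\in W$); the clique-number hypothesis puts all of $W$ in one component of $G$. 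It remains to find a $(u,v)$-path of $G$ through $W$ unless $G$ is one of the listed graphs.

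\smallskip
\noindent\emph{The auxiliary graph and the path $P'$.}
Put $k=2\omega-n-1\ge 1$ and let $G'$ arise from $G$ by adding $k$ pairwise non-adjacent new vertices $z_1,\dots,z_k$, each joined to every vertex of $V(G)$. Then $|V(G')|=2\omega-1=:n'$, so $\frac{n'+1}{2}=\omega$; moreover $d_{G'}(w)=d_G(w)+k\ge\omega$ for $w\in W$, $d_{G'}(x)=d_G(x)+k\le\omega-1$ for $x\in V(G)\setminus W$, and $d_{G'}(z_i)=n\ge\omega$. Hence $u,v$ have degree at least $\frac{n'+1}{2}$ in $G'$, and Theorem~\ref{Theorem-orepath} yields a $(u,v)$-path $P'$ of $G'$ through every vertex of degree at least $\frac{n'+1}{2}$; in particular $P'$ contains $W$ and all of $z_1,\dots,z_k$, while no vertex of $V(G)\setminus W$ is forced onto it. Among all such $P'$ fix one minimizing the number of indices $i$ for which the two neighbours of $z_i$ on $P'$ are non-adjacent in $G$, and, subject to that, minimizing $|V(P')|$.

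\smallskip
\noindent\emph{Transferring back, and the exceptions.}
Deleting $z_1,\dots,z_k$ from $P'$ leaves $k+1$ nonempty pairwise-disjoint subpaths $Q_0,\dots,Q_k$ of $G$ in their $P'$-order, with $u$ an end of $Q_0$, $v$ an end of $Q_k$, $\bigcup_i V(Q_i)=:U\supseteq W$, and with the last vertex $t_{i-1}$ of $Q_{i-1}$ and the first vertex $s_i$ of $Q_i$ being the two $P'$-neighbours of $z_i$. If $t_{i-1}s_i\in E(G)$ for every $i$, concatenating the $Q_i$ along these edges gives the desired $(u,v)$-path of $G$ through $W$. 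Otherwise $t_{j-1}s_j\notin E(G)$ for some $j$; exploiting that each $z_i$ is adjacent to all of $V(G)$, one reroutes $P'$ near $z_j$ and, where needed, permutes and reverses the blocks $Q_1,\dots,Q_{k-1}$ along edges of $G$ between their endpoints, producing either a $(u,v)$-path of $G$ through $W$, or an admissible path contradicting the extremal choice of $P'$. Tracking which edges of $G$ must be missing in order for every such manipulation to be blocked, and using that $G$ has clique number $\omega$ to bound its edge-set, forces $G$ into one of exactly two rigid configurations: either a set $A$ of $n-\omega+1$ vertices, containing $u$ and $v$, is adjacent to all other vertices and $G-A$ has $n-\omega+1$ components, whence $G\cong K_{n-\omega+1}\vee(K_{2\omega-n-1}\cup\overline{K_{n-\omega}})$; or the pair $\{u,v\}$ separates $G$ into the clique $K_{\omega-2}$ and a connected component on the remaining $n-\omega$ vertices, with enough edges present that $G$ contains $H_1xyH_2$ as a spanning subgraph and is contained in $K_2\vee(K_{\omega-2}\cup K_{n-\omega})$, i.e.\ $G\in\mathcal{H}(n,\omega)$. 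These exceptions are genuine: in the first graph, a $(u,v)$-path through $W=V(G)$ is a Hamilton $(u,v)$-path, and deleting the $n-\omega+1$ vertices of $A$ (which include both endpoints) from it leaves at most $n-\omega$ subpaths, too few to cover the $n-\omega+1$ components of $G-A$; in the second, deleting $u,v$ from such a path leaves a single subpath that would have to meet both components of $G-\{u,v\}$, impossible since each contains a vertex of $W$.

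\smallskip
\noindent\emph{Main obstacle.}
The hard part is the reassembly: to prove that if the blocks $Q_0,\dots,Q_k$ cannot be threaded into a single $(u,v)$-path of $G$ for any admissible $P'$, then a small cut of $G$ splits it into one component too many and of a rigid shape, with the clique number forcing that shape to be unique. Additional care is needed at the ends of the range --- $k=1$ (i.e.\ $n$ even and $\omega=\frac{n+2}{2}$, where the reassembly is merely a two-block merge) and $\omega=n-1$ (where the two exceptional families coincide) --- and in confirming at each stage that the path never needs a vertex outside $W$.
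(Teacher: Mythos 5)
Your set-up is fine: the easy range $\omega\le\frac{n+1}{2}$ via Theorem~\ref{Theorem-orepath}, the auxiliary graph $G'$ of order $n'=2\omega-1$ with $k=2\omega-n-1$ universal vertices (the degree bookkeeping $d_{G'}(w)\ge\omega=\frac{n'+1}{2}$ for $w\in W$, $d_{G'}(x)\le\omega-1$ for $x\notin W$ is correct), and your verification that the two listed families really are exceptions is sound. But the heart of the theorem is the converse direction: if \emph{no} $(u,v)$-path of $G$ through $W$ exists, then $G$ is (up to the stated range of $\omega$) exactly $K_{n-\omega+1}\vee(K_{2\omega-n-1}\cup\overline{K_{n-\omega}})$ or a member of $\mathcal{H}(n,\omega)$. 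At precisely this point your argument is only a plan: ``one reroutes $P'$ near $z_j$ \dots permutes and reverses the blocks \dots tracking which edges of $G$ must be missing \dots forces $G$ into one of exactly two rigid configurations.'' No rerouting lemma is stated, no invariant is proved about what the extremal choice of $P'$ forbids, and nothing is shown about how the clique number enters; indeed your own ``Main obstacle'' paragraph concedes that this reassembly-to-structure step is unproved. Note also that the blocks $Q_0,\dots,Q_k$ carry no degree information at their endpoints (the gap vertices $t_{j-1},s_j$ can be arbitrary vertices of $G$), so Lemma~\ref{Lemma-orepath} cannot be invoked to close the gaps, and with up to $k=n-3$ gaps (when $\omega=n-1$) the claim that every blocked configuration collapses to just two families is a substantial structural statement, not a routine case check.

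For comparison, the paper does not pass through an auxiliary graph at all: it takes a $(u,v)$-path $P$ of $G$ containing as many vertices of degree at least $n-\omega+1$ as possible and analyzes its interaction with a maximum clique $K$ --- Claims~\ref{newc}--\ref{c2} (at least two $K$-vertices on $P$; independence of $N_P^+(x)$ and $N_P^-(x)$ for $x$ outside $P$; either $K\subseteq V(P)$ or $G\cong K_{n/2}\vee\overline{K_{n/2}}$ or $K_2\vee 2K_{n/2-1}$) --- and then a tight counting argument ($n\ge|V(P)|+|N_R(x)|+1\ge\cdots\ge n$) pins down $V(P)=K\cup N_P^+(x)$ and, after a short case split on $|N_P^+(x)\cap N_P^-(x)|$ and $d_P(x)$, yields exactly the two exceptional families. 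An analogue of this clique-versus-path analysis is what your reassembly step would have to reproduce, and it is missing; as written, the proposal does not constitute a proof.
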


A graph is called hamiltonian-connected if there is a Hamilton path between any two distinct vertices. 
From Theorem \ref{Theorem-path}, we can obtain a sufficient condition for hamiltonian-connected graph directly.
 
\begin{cor}\label{l}
Let $G$ be a graph of order $n$  with minimum degree $\delta$ and clique number $\omega$. If $\delta+\omega\ge n+1$, then $G$ is hamiltonian-connected unless $\frac{n+2}{2}\le \omega\le n-1$, and $G\cong K_{n-\omega+1}\vee (K_{2\omega-n-1}\cup \overline{K_{n-\omega}})$  or $G\in\mathcal{H}(n,\omega)$. 
\end{cor}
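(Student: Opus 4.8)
The plan is to obtain \Cref{l} as an immediate consequence of \Cref{Theorem-path}. The crucial --- and essentially the only --- observation is that $\delta+\omega\ge n+1$ rewrites as $\delta\ge n-\omega+1$, so \emph{every} vertex of $G$ has degree at least $n-\omega+1$. Consequently, for any two distinct vertices $u,v$ the degree requirement of \Cref{Theorem-path} is met by both of them, and a $(u,v)$-path containing all vertices of degree at least $n-\omega+1$ is precisely a Hamilton $(u,v)$-path. Hence, as long as $G$ is neither $K_{n-\omega+1}\vee (K_{2\omega-n-1}\cup \overline{K_{n-\omega}})$ nor a member of $\mathcal H(n,\omega)$, \Cref{Theorem-path} produces a Hamilton $(u,v)$-path for every such pair; since $u$ and $v$ were arbitrary, $G$ is hamiltonian-connected. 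This is the whole of the positive implication.

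It then remains to check that the listed exceptions are genuine, i.e., that each of them does satisfy $\delta+\omega\ge n+1$ yet fails to be hamiltonian-connected, so that the exceptional list cannot be shortened. The tool is the elementary remark that a hamiltonian-connected graph $H$ has $c(H-S)\le|S|-1$ for every vertex set $S$ with $|S|\ge 2$: taking two members of $S$ as the endpoints of a Hamilton path forces its interior, which spans $H-S$, to break into at most $|S|-1$ subpaths. For $G=K_{n-\omega+1}\vee (K_{2\omega-n-1}\cup \overline{K_{n-\omega}})$ with $\frac{n+2}{2}\le\omega\le n-1$, a one-line degree count gives $\delta=n-\omega+1$ (attained on $\overline{K_{n-\omega}}$, with $\omega\ge\frac{n+2}{2}$ being exactly what pushes the $K_{2\omega-n-1}$-vertices up to degree at least $n-\omega+1$), so $\delta+\omega=n+1$; and taking $S=V(K_{n-\omega+1})$ leaves the clique $K_{2\omega-n-1}$ (nonempty, since $2\omega-n-1\ge 1$) together with $n-\omega$ isolated vertices, giving $c(G-S)=n-\omega+1=|S|>|S|-1$. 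For $G\in\mathcal H(n,\omega)$, let $S$ be the two vertices playing the role of $K_2$: since $H_1xyH_2-S$ already falls apart into the clique $K_{\omega-2}$ and the star $K_{1,n-\omega-1}$, while $G-S$ is a subgraph of $K_{\omega-2}\cup K_{n-\omega}$ and so has no edge between these two, we get $c(G-S)=2>1=|S|-1$; moreover the maximal such graph $K_2\vee (K_{\omega-2}\cup K_{n-\omega})$ again has $\delta=n-\omega+1$ exactly when $\omega\ge\frac{n+2}{2}$, so the degree hypothesis really is satisfied by an exceptional graph.

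In short, all the substance lives in \Cref{Theorem-path}; the corollary is a direct deduction, and the only care required is the bookkeeping above --- confirming that each exceptional graph both meets the degree hypothesis (so the exception is not vacuous) and admits an explicit small vertex cut witnessing the failure of hamiltonian-connectedness. I do not anticipate any real obstacle, beyond disposing of the degenerate small values of $n$ and $\omega$ at the start, where the range $\frac{n+2}{2}\le\omega\le n-1$ is empty and the statement is trivial.
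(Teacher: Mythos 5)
Your deduction is exactly how the paper obtains \Cref{l}: since $\delta+\omega\ge n+1$ means every vertex has degree at least $n-\omega+1$, \Cref{Theorem-path} applied to an arbitrary pair $u,v$ yields a Hamilton $(u,v)$-path outside the exceptional graphs, which is all the paper does (it states the corollary as an immediate consequence). Your additional bookkeeping verifying that the exceptional graphs satisfy the degree hypothesis and fail hamiltonian-connectedness (via $c(G-S)\le |S|-1$) is correct but not needed for the statement as phrased.
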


\section{Notations}

Let $G$ be a graph with vertex set $V(G)$ and edge set $E(G)$. Denote by $\overline{G}$ the completement of $G$.
For $v\in V(G)$, $N_G(v)$ denotes the neighborhood of $v$ in $G$ and $d_G(v)$ the degree of $v$, i.e., $d_G(v)=|N_G(v)|$. 
We write $N(v)$ and $d(v)$ for $N_G(v)$ and $d_G(v)$ whenever there are no confusions.
Denote by $\delta(G)$ the minimum degree of $G$.
For $v\in V(G)$ and a subgraph $F$ of $G$, let
$N_F(v)=N_G(v)\cap V(F)$. 

For a graph $G$ with $S\subset V(G)$, denote by $G[S]$ the subgraph of $G$ induced by $S$. Let $G-S=G[V(G)\setminus S]$. 
Denote by $K_n$ the complete graph of order $n$ and $K_{a,b}$ the complete bipartite graph with partite size $a$ and $b$. 

Let $C$ be a cycle with vertices labeled clockwise. For $u\in V(C)$, denote by $u^{+}$ and $u^-$ the immediate successor and predecessor on $C$. 
For $S\subseteq V(C)$, let $S^+=\{u^+:u\in S\}$ and $S^-=\{u^-:u\in S\}$.
For $u,v\in V(C)$, denoted by $u\overrightarrow{C}v$ the segment of $C$ from $u$ to $v$ clockwise.
Denoted by $u\overleftarrow{C} v$ the opposite segment of $C$ from $u$ to $v$.

For a path $P$ with $u,v\in V(P)$, denote by $uPv$ the segment of $P$ from $u$ to $v$. We usually consider the $(u,v)$-path $P$ as the path from $u$ to $v$, i.e., $uPv$. 

Let $P$ be a $(u,v)$-path. For $x\in V(P)$ with $x\ne v$, denote by $x^+$ the immediate successor on $P$.  For $x\in V(P)$  with $x\ne u$, denote by $x^-$ the predecessor on $P$. 
For $S\subseteq V(P)$, let $S^+=\{x^+:x\in S\setminus\{v\}\}$ and $S^-=\{x^-:x\in S\setminus\{u\}\}$. Obviously, $|S^+|=|S|$ or $|S^+|=|S|-1$. 

\section{Proofs of Theorems \ref{x} and \ref{y} about cycles}

\begin{proof}[Proof of Theorem \ref{x}]
If $n-\omega\ge \frac{n}{2}$, then the result follows by Theorem \ref{th1}. Suppose in the following that $n-\omega< \frac{n}{2}$, that is, $\omega\ge \frac{n+1}{2}$. 

It is trivial if $\omega=n$. If $\omega=n-1$, then, as $G$ is $2$-connected, $G$ contains $K_2\vee (K_{n-3}\cup K_1)$ as a spanning subgraph, and so $G$ is hamiltonian, as desired. So we may assume in the following that $\frac{n+1}{2}\le \omega\le n-2$.

Suppose to the contrary that there is no such cycles. Let $C$ be a cycle of $G$ such that $C$ contains vertices of degree at least $n-\omega$ as many as possible. 
Let $R=V(G)\setminus V(C)$ and $K$ be a largest clique in $G$.

\begin{claim}\label{newc1}
$C$ contains at least two vertices of $K$.
\end{claim}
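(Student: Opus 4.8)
The plan is to argue by contradiction: assuming $C$ meets $K$ in at most one vertex, I would build a cycle of $G$ carrying strictly more vertices of degree at least $n-\omega$ than $C$, contradicting the maximality of $C$. The estimates will rest on a few elementary consequences of $\frac{n+1}{2}\le\omega\le n-2$, to be recorded first: every vertex of $K$ has degree at least $\omega-1\ge n-\omega$ in $G$ (so all $\omega$ vertices of $K$ are ``relevant''), while $\omega>\tfrac n2$ and $n\ge5$, so $\omega\ge3$ and the clique $K\cong K_\omega$ is hamiltonian.

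First I would dispose of the case $V(C)\cap K=\emptyset$. Here $C$ and $K$ are disjoint, so $|V(C)|\le n-\omega<\omega$ and $C$ contains fewer than $\omega$ relevant vertices; but a Hamilton cycle of $K$ is a cycle of $G$ through all $\omega$ relevant vertices of $K$, already contradicting the choice of $C$. No use of $2$-connectivity is needed here.

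The substantive case is $V(C)\cap K=\{z\}$. Set $K'=K\setminus\{z\}$, so $K'$ is disjoint from $C$ and $|K'|=\omega-1\ge2$. Since $G$ is $2$-connected, $G-z$ is connected, hence contains a path between $V(K')$ and $V(C)\setminus\{z\}$; I would take a shortest such path $Q$, with ends $b\in K'$ and $a\in V(C)\setminus\{z\}$, so that $Q$ is internally disjoint from $V(C)\cup V(K')$ and avoids $z$. Writing $C_1,C_2$ for the two $z$--$a$ arcs of $C$, and using that $z$ is adjacent to all of $K'$ and that $K'$ spans a clique, I would form the cycle $C^{*}$ by following $C_1$ from $z$ to $a$, then $Q$ from $a$ to $b$, then a Hamilton path of $K'$ from $b$ to some $b'\in K'\setminus\{b\}$, and finally the edge $b'z$. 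Then $V(C^{*})\supseteq V(C_1)\cup K'$, so relative to $C$ the new cycle loses only vertices lying strictly inside $C_2$ — at most $|V(C_2)|-2\le|V(C)|-2\le n-\omega-1$ of them — and gains the $\omega-1$ relevant vertices of $K'$. As $(\omega-1)-(n-\omega-1)=2\omega-n>0$, this yields the desired contradiction.

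The hard part will be this last case: one has to see that $2$-connectivity is exactly what lets us re-enter $C$ at a vertex distinct from $z$, and one has to check that the bookkeeping survives the weakest hypothesis $\omega\ge\frac{n+1}{2}$ (it does, since we sacrifice only the interior of a single arc of $C$, whose length is controlled by $|V(C)|\le n-\omega+1$). The case $V(C)\cap K=\emptyset$ and all remaining estimates are routine.
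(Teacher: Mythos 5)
Your proof is correct, and in the decisive case it follows a genuinely different route from the paper's. The paper first uses the maximality of $C$ (compared against a cycle through the clique $K$, all of whose vertices have degree at least $\omega-1\ge n-\omega$) to get $|V(C)|\ge\omega$, then inclusion--exclusion to get $|V(C)\cap K|\ge 2\omega-n\ge 1$; in the case of a unique common vertex $u$ it runs a tight counting argument forcing $n$ odd, $\omega=\frac{n+1}{2}$, $|V(C)|=\omega$ and $V(G)=V(C)\cup K$, and only with this extremal structure does $2$-connectivity yield an \emph{edge} from $V(C)\setminus\{u\}$ to $K\setminus\{u\}$ along which the cycle $u z_1\dots z_{\omega-2}zv\overrightarrow{C}u$ is built. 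You instead handle $|V(C)\cap K|=0$ by direct comparison with a cycle through $K$, and $|V(C)\cap K|=\{z\}$ by taking a shortest path $Q$ in the connected graph $G-z$ from $V(C)\setminus\{z\}$ to $K\setminus\{z\}$ (correctly internally disjoint from both sets by minimality) and splicing one $z$--$a$ arc of $C$, $Q$, a Hamilton path of $K\setminus\{z\}$, and the edge back to $z$. Your bookkeeping is sound: since $C$ meets $K$ only in $z$, $|V(C)|\le n-\omega+1$, so at most $n-\omega-1$ vertices of degree at least $n-\omega$ are discarded (the interior of one arc), while the $\omega-1$ new vertices of $K\setminus\{z\}$ all have degree at least $\omega-1\ge n-\omega$, for a net gain of at least $2\omega-n\ge 1$, contradicting maximality. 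What each approach buys: the paper's equality-forcing count makes the final rerouting very short (a single edge suffices once $V(G)=V(C)\cup K$ is known), whereas your path-based rerouting and gain/loss estimate avoid the extremal analysis altogether and work uniformly over the whole range $\frac{n+1}{2}\le\omega\le n-2$, at the cost of the slightly more elaborate shortest-path construction.
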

\begin{proof}
As $\omega\ge \frac{n+1}{2}$, $\omega>n-\omega$. Then $|V(C)|\ge\omega$ by the assumption of $C$ and so by principle of inclusion-exclusion, \[
|V(C)\cap K|=|V(C)|+|K|-|V(C)\cup K|\ge \omega+\omega-n\ge 1,
\]
that is, there is at least one vertex of $K$ on $C$, say $u$.

If $u$ is the unique vertex of $K$ on $C$, then $|V(C)\setminus K|\ge \omega-1$ and so \[
n\ge |V(C)\setminus K|+|K|\ge \omega-1+\omega\ge n,
\]
showing that $n$ is odd, $\omega=\frac{n+1}{2}$ and $|V(C)|=\omega=\frac{n+1}{2}$. As $G$ is $2$-connected, there exists some vertex $v\in V(C)\setminus \{u\}$ adjacent to some vertex $z\in K\setminus\{u\}$. Then, with $z_1,\dots,z_{\omega-2}$ being vertices of $K\setminus\{u,z\}$, $uz_1\dots z_{\omega-2}zv\overrightarrow{C}u$ is a cycle containing more vertices of degree at least $n-\omega$, a contradiction. So there are at least two vertices of $K$ on $C$, as desired.
\end{proof}

\begin{claim}\label{newc2}
For any vertex $x$ outside $C$, $N_C^+(x)$ and $N_C^-(x)$ are independent.
\end{claim}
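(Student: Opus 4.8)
For any vertex $x$ outside the cycle $C$ (where $C$ is a longest cycle through as many high-degree vertices as possible), the sets $N_C^+(x)$ and $N_C^-(x)$ are independent.

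The plan is a standard cycle-surgery (``rotation'') argument that exploits the extremal choice of $C$. First I would record the preliminary observation that $x$ has no two \emph{consecutive} vertices of $C$ among its neighbours: if $u,u^{+}\in N_{C}(x)$, then $x\,u^{+}\overrightarrow{C}u\,x$ is a cycle on $V(C)\cup\{x\}$, hence a cycle containing every vertex of degree at least $n-\omega$ lying on $C$ together with one further vertex, contradicting the choice of $C$. In particular, for distinct $u_{1},u_{2}\in N_{C}(x)$ the vertices $u_{1}^{+},u_{2}^{+}$ are distinct and lie outside $N_{C}(x)$, and symmetrically for $u_{1}^{-},u_{2}^{-}$.

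Next, suppose towards a contradiction that $N_{C}^{+}(x)$ is not independent, say $u_{1}^{+}u_{2}^{+}\in E(G)$ with $u_{1},u_{2}\in N_{C}(x)$; by the previous paragraph $u_{1}$ and $u_{2}$ are distinct and non-consecutive on $C$. I would then form the cycle
\[
C'\;=\;x\,u_{1}\overleftarrow{C}u_{2}^{+}\,u_{1}^{+}\overrightarrow{C}u_{2}\,x,
\]
obtained by going from $u_{1}$ backwards along $C$ to $u_{2}^{+}$, crossing the edge $u_{2}^{+}u_{1}^{+}$, going forwards along $C$ from $u_{1}^{+}$ to $u_{2}$, and closing up through $x$. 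Since $u_{1},u_{2}$ are non-consecutive, the two arcs $u_{1}\overleftarrow{C}u_{2}^{+}$ and $u_{1}^{+}\overrightarrow{C}u_{2}$ are internally disjoint and together exhaust $V(C)$, so $V(C')=V(C)\cup\{x\}$. Hence $C'$ contains all vertices of degree at least $n-\omega$ that lie on $C$, and at least one vertex more, contradicting the choice of $C$. The independence of $N_{C}^{-}(x)$ then follows verbatim after reversing the orientation of $C$ (which interchanges $u^{+}$ and $u^{-}$), or equivalently from the mirror surgery $x\,u_{1}\overrightarrow{C}u_{2}^{-}\,u_{1}^{-}\overleftarrow{C}u_{2}\,x$.

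I do not expect a genuine obstacle; the only points needing care are bookkeeping. One must verify that the two arcs making up $C'$ really partition $V(C)$ — this is exactly where the non-consecutiveness of $u_{1},u_{2}$ is used, to stop an arc from degenerating in a way that loses a vertex — and one must apply the extremality of $C$ correctly: the surgered cycle has vertex set $V(C)\cup\{x\}$, so it covers every high-degree vertex that $C$ covers plus at least one more vertex, which already yields a contradiction if $x$ itself has degree at least $n-\omega$, and otherwise is absorbed by $C$ having been chosen (as is customary here) longest among the cycles capturing the maximum number of degree-$\ge n-\omega$ vertices. This is the same rotation used in the proof of Theorem~\ref{th1}.
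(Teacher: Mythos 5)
Your proof is correct and is essentially the paper's own argument: the identical surgery $x z_1\overleftarrow{C}z_2^+\,z_1^+\overrightarrow{C}z_2\,x$, producing a cycle on $V(C)\cup\{x\}$ that contradicts the extremal choice of $C$. Your preliminary step ruling out consecutive neighbours is not actually needed (the surgered cycle still covers $V(C)\cup\{x\}$ when $z_1,z_2$ are consecutive on $C$), and the caveat you raise about a low-degree $x$ is shared by the paper's proof, which only ever applies the claim to vertices of degree at least $n-\omega$.
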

\begin{proof}
To the contrary, assume that $z_1^+z_2^+\in E(G)$ with $z_1,z_2\in N_C(x)$. Then \[
xz_1\overleftarrow{C}z_2^+z_1^+\overrightarrow{C}z_2x
\]
is a cycle containing more vertices of degree at least $n-\omega$ in $G$, a contradiction. This shows that $N_C^+(x)$ is independent. Similarly, $N_C^-(x)$ is independent, as desired.
\end{proof}

\begin{claim}\label{c1}
Each vertex in $K$ is contained in $C$ or $G\cong K_{\frac{n-1}{2}}\vee \overline{K_{\frac{n+1}{2}}}$.
\end{claim}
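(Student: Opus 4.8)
The plan is to prove the dichotomy by contradiction. Suppose some vertex of $K$ lies outside $C$, and set $W'=K\setminus V(C)$, $m:=|W'|\ge 1$, $W:=K\cap V(C)$, $k:=|W|$; let $D$ denote the set of vertices of degree at least $n-\omega$ (the vertices we must cover). By Claim~\ref{newc1}, $k\ge 2$, and since $\omega\ge\frac{n+1}{2}$ every vertex of $K$ has degree $\ge\omega-1\ge n-\omega$, so $W\cup W'\subseteq D$. For each $x\in W'$, the maximality of $C$ forces $N_C(x)\cap N_C^+(x)=\emptyset$ (otherwise $x$ could be inserted into $C$ between two consecutive neighbours, gaining a vertex of $D$); as $W\subseteq N_C(x)$, this yields $W\cap W^+=\emptyset$, so no two vertices of $W$ are consecutive on $C$.

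The decisive step is a rerouting through $K$. Write $W=\{w_1,\dots,w_k\}$ in cyclic order along $C$, write $W'=\{x_1,\dots,x_m\}$, and let $A_i$ be the set of internal vertices of the arc $w_i\overrightarrow{C}w_{i+1}$, so that $|A_i|\ge 1$ and $\sum_i|A_i|=|V(C)|-k$. For each $i$, the path $w_{i+1}\overrightarrow{C}w_i$ together with the path $w_ix_1x_2\cdots x_mw_{i+1}$ (all of whose edges lie in $K$) forms a cycle $C_i^{*}$ with vertex set $(V(C)\setminus A_i)\cup W'$, which therefore covers exactly $m-|A_i\cap D|$ more vertices of $D$ than $C$ does. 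By maximality of $C$, $|A_i\cap D|\ge m$ for every $i$; summing and using that $W$, $W'$ and the $A_i$ are pairwise disjoint subsets of $D$, we get $n\ge|D|\ge k+m+km$. Substituting $m=\omega-k$ and using $\omega\ge\frac{n+1}{2}$ together with $k\ge 2$, this rearranges to $n(k-1)\le(2k+1)(k-1)$, hence $k\ge\frac{n-1}{2}$. On the other hand $W\cup W^+\subseteq V(C)$ forces $|V(C)|\ge 2k$, while $|V(C)|\le n-|W'|=n-\omega+k$; hence $k\le n-\omega\le\frac{n-1}{2}$. Combining, $k=\frac{n-1}{2}$, $\omega=\frac{n+1}{2}$ (so $n$ is odd), $m=1$, $|V(C)|=n-1$, and $R=W'=\{x\}$ is a single vertex.

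It remains to identify $G$. Since $|V(C)|=2k$ we have $V(C)=W\sqcup W^+$, so each arc $A_i$ has a single internal vertex $z_i$ and $C$ is the alternating cycle $w_1z_1w_2z_2\cdots w_kz_kw_1$ with $z_i=w_i^+$ and $z_i^+=w_{i+1}$. For every $i$ we have $xz_i\notin E(G)$ (otherwise $z_i\in N_C(x)$ and $z_i^+=w_{i+1}\in N_C(x)$ contradict $N_C(x)\cap N_C^+(x)=\emptyset$), hence $N_C(x)=W$ and, by Claim~\ref{newc2}, $N_C^+(x)=\{z_1,\dots,z_k\}$ is independent; combined with $xz_i\notin E(G)$ this shows $I:=\{x,z_1,\dots,z_k\}$ is an independent set of size $\frac{n+1}{2}$, and that $N(z_i)\subseteq W$ for each $i$. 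If $w_jz_i\notin E(G)$ for some $i,j$, then $d(z_i)\le k-1<n-\omega$, so $z_i\notin D$; but then replacing $z_i$ by $x$ on $C$ (using $w_ix,xw_{i+1}\in E(G)$) produces a cycle through $V(G)\setminus\{z_i\}$, hence through all of $D$, contradicting our standing assumption that no such cycle exists. Therefore $W$ is complete to $I$; as $W$ is a clique and $I$ is independent, $G\cong K_{(n-1)/2}\vee\overline{K_{(n+1)/2}}$, as claimed.

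The main obstacle is the rerouting argument in the second paragraph: one has to see that the right modification is to absorb the \emph{entire} off-cycle part $W'$ of the clique into $C$ at the price of deleting a single arc, and then to extract from the resulting counting inequality both equalities $\omega=\frac{n+1}{2}$ and $|W'|=1$. Once $W'$ is reduced to a single vertex, the structural identification of $G$ in the last paragraph is essentially bookkeeping built on Claims~\ref{newc2} and the maximality of $C$.
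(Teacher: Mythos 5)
Your proposal is correct and follows essentially the same route as the paper: the same rerouting of $C$ through the off-cycle clique vertices between two consecutive on-cycle clique vertices, leading to the counting inequality $n\ge k+m+km$ (the paper's $n\ge\omega_1\omega_2+\omega_2+\omega_1\ge 2\omega-1\ge n$), whose equality case forces $n$ odd, $\omega=\frac{n+1}{2}$, a single off-cycle clique vertex $x$ and $|V(C)|=n-1$, followed by the same structural identification via Claim~\ref{newc2} and the degree argument on the successors of $N_C(x)$. The only differences are bookkeeping (you count vertices of degree at least $n-\omega$ in each arc and add the separate bound $|V(C)|\ge 2k$, whereas the paper counts all arc vertices in one inequality chain), so no further comment is needed.
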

\begin{proof}
Suppose to the contrary that there exists some vertex in $K$ that is not contained in $C$. Let $\omega_1=|K\cap R|$ and $\omega_2=|K\cap V(C)|$. Evidently, $\omega_1+\omega_2=\omega$ with $\omega_1,\omega_2\ge 1$. Moreover, by Claim \ref{newc1}, we have $\omega_2\ge 2$.

By the assumption of $C$, for any two vertices of $K$ on $C$, the segment between these two vertices has at least $\omega_1$ vertices outside $K$. Then $|V(C)|\ge \omega_1\omega_2+\omega_2$. So 
\[
n\ge |V(C)|+|K\cap R|\ge \omega_1\omega_2+\omega_2+\omega_1\ge 2\omega-1\ge n.
\]
This shows that $n$ is odd, $\omega=\frac{n+1}{2}$, $\omega_1=1$, $\omega_2=\omega-1$, $R=K\setminus V(C)$ and $C$ contains exactly $2\omega_2$ vertices. 
Denote by $x$ the unique vertex outside $C$. It then follows by Claim~\ref{newc2} that $N_C^+(x)$ is independent and so $N_C^+(x)\cap K=\emptyset$. Recall that $C$ contains exatcly $2\omega_2$ vertices.  
Then $V(C)=N_C(x)\cup N_C^+(x)$ and $K=N_C(x)\cup \{x\}$.
We claim that each vertex in $N_C^+(x)$ has degree at least $n-\omega$. To the contrary, suppose that $y\in N_C^+(x)$ has degree at most $n-\omega-1$. Then $y^-\overleftarrow{C}y^+xy^-$ is a cycle containing more vertices of degree at least $n-\omega$ in $G$, a contradiction.
Thus each vertex in $N_C^+(x)$ is adjacent to any vertex in $N_C(x)$ and so $G\cong K_{\frac{n-1}{2}}\vee \overline{K_{\frac{n+1}{2}}}$.
\end{proof}

If $G\cong K_{\frac{n-1}{2}}\vee \overline{K_{\frac{n+1}{2}}}$, then $G$ does not contain a cycle satisfying the condition. Suppose in the following that $G\ncong K_{\frac{n-1}{2}}\vee \overline{K_{\frac{n+1}{2}}}$. 
By Claim \ref{c1}, each vertex in $K$ is contained in $C$. 
Let $x$ be a vertex of degree at least $n-\omega$ outside $C$. 

By Claim~\ref{newc2}, $N_C^+(x)$ is independent. Then $|N_C^+(x)\cap K|\le 1$ and so
\[
n\ge |V(C)|+|N_R(x)|+|\{x\}|\ge |K\setminus N_C^+(x)|+|N_C^+(x)|+|N_R(x)|+1\ge \omega-1+n-\omega+1=n.
\]
As the above inequality holds with equality, we have $|N_C^+(x)\cap K|= 1$, $V(C)=K\cup N_C^+(x)$ and $V(G)=V(C)\cup N_R(x)\cup \{x\}$. Similarly, we have $|N_C^-(x)\cap K|= 1$ and $V(C)=K\cup N_C^-(x)$. It then follows that $N_C^+(x)\setminus K=N_C^-(x)\setminus K$ and so $|N_C^+(x)\cap N_C^-(x)|\ge d_C(x)-1$.

Denote by $x_1,\dots,x_t$ the neighbors of $x$ on $C$ clockwise, where $t=d_C(x)$. Recall that $|N_C^+(x)\cap K|=1$ and $V(C)=K\cup N_C^+(x)$. Assume that $x_1^+,\dots,x_{t-1}^+\in N_C^+(x)\setminus K$. Then $x_1^+,\dots,x_{t-1}^+\in N_C^-(x)\setminus K$, which shows that $x_{i}^+=x_{i+1}^-$ for $i=1,\dots,t-1$. 

If $N_R(x_i^+)\ne \emptyset$ for some $i=1,\dots,t-1$, say $x'\in N_R(x_i^+)$, then, as $xx_i^+\notin E(G)$ and $R=N_R(x)\cup \{x\}$, \[
x_ixx'x_i^+\overrightarrow{C}x_i
\] 
is a cycle with more vertices of degree $n-\omega$, a contradiction. So $N_R(x_i^+)=\emptyset$ for any $i=1,\dots,t-1$.

We claim that $N(x_i^+)\cap (K\setminus\{x_1,\dots,x_t\})=\emptyset$ for $i=1,\dots,t-1$. Otherwise, assume that $x_i^+z\in E(G)$ for some $z\in K\setminus\{x_1,\dots,x_t\}$. Then, as $N_C^+(x)$ and $N_C^-(x)$ are independent,  $z\ne x_j^+,x_j^-$ for any $j=1,\dots,t$ and so $z$ lies in the segment $x_t^+\overrightarrow{C}x_1^-$ with $z\ne x_t^+,x_1^-$. So $z^+\in K$ and 
\[
xx_i\overleftarrow{C}z^+x_t^+\overrightarrow{C}zx_i^+\overrightarrow{C}x_tx
\]
is a cycle with more vertices of degree at least $n-\omega$, a contradiction. 
Recall that $V(G)=V(C)\cup N_R(x)\cup \{x\}$ and $N_R(x_i^+)=\emptyset$ for $i=1,\dots,t-1$. We have $N(x_i^+)\subseteq N_C(x)$ for $i=1,\dots,t-1$. Recall that $x_i^+=x_{i+1}^-$. Then $d(x_i^+)\ge n-\omega$ as otherwise, 
\[
x_ixx_{i+1}\overrightarrow{C}x_i
\]
is a cycle with more vertices of degree $n-\omega$, a contradiction. So
$t=|N_C(x)|\ge d(x_i^+)\ge n-\omega$. It then follows from 
\[
n\ge  |V(C)|+|\{x\}|\ge \omega+t-1+1\ge n
\]
that $t=n-\omega$, $R=\{x\}$ and $N(x_i^+)=N(x)=\{x_1,\dots,x_t\}$. This shows that $G\cong K_{n-\omega}\vee (K_{2\omega-n}\cup \overline{K_{n-\omega}})$. 
\end{proof}


We need some well-known results for the proof of Theorem~\ref{y}. 

\begin{theorem}{\textup {(Bondy \cite{Bondy})}}\label{bondy}
Let $G$ be a hamiltonian graph of order $n$ with $|E(G)|\ge \frac{n^2}{4}$. Then $G$ is either pancyclic or else is the complete bipartite graph $K_{n/2,n/2}.$
\end{theorem}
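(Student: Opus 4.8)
The plan is to fix a Hamilton cycle $C=v_1v_2\cdots v_nv_1$ (which exists since $G$ is hamiltonian) and to show, under the hypothesis $|E(G)|\ge \frac{n^2}{4}$, that $G$ contains a cycle of every length $\ell$ with $3\le \ell\le n-1$ unless $G\cong K_{n/2,n/2}$; together with $C$ itself this yields pancyclicity. I would argue by contradiction: suppose $G$ is neither pancyclic nor equal to $K_{n/2,n/2}$, and let $m$ with $3\le m\le n-1$ be a length for which $G$ has no $m$-cycle. All indices are read modulo $n$ along $C$, and since $C$ is a Hamilton cycle every edge of $G$ is either an edge of $C$ or a chord.

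The heart of the argument is a chord-configuration lemma that translates the absence of an $m$-cycle into forbidden adjacency patterns along $C$. A single chord $v_iv_j$ cuts $C$ into two arcs and, together with each of them, produces cycles of lengths $(j-i)+1$ and $(n-(j-i))+1$; hence no chord may have span $m-1$ or $n-m+1$. A pair of crossing chords $v_av_c,\,v_bv_d$ with $a<b<c<d$ produces the cycle $v_a\,v_{a+1}\cdots v_b\,v_d\,v_{d-1}\cdots v_c\,v_a$ of length $(b-a)+(d-c)+2$; hence no two crossing chords may have spans summing to $m-2$. I would package these exclusions so that, reading around all $n$ rotations of $C$, the chords sort into a bounded number of mutually compatible classes, which is the mechanism that keeps the edge total small.

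Next I would convert these exclusions into a quantitative estimate by a double count over the consecutive pairs $v_iv_{i+1}$, bounding $d(v_i)+d(v_{i+1})$ using the forbidden-configuration relations and summing over all $i$, to conclude $|E(G)|\le \frac{n^2}{4}$. Since the hypothesis forces $|E(G)|\ge \frac{n^2}{4}$, every inequality in the count must be tight. The final step is the rigidity analysis: I would show that equality forces $n$ to be even (so that for $n$ odd the graph is automatically pancyclic), forces a bipartition of $V(G)$ read off from the parity of the index along $C$, and forces every cross edge to be present, i.e. $G\cong K_{n/2,n/2}$. This graph indeed has only even cycles and so is genuinely not pancyclic, which explains the exceptional case.

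The main obstacle is the chord-configuration lemma together with its equality case. The delicate point is to enumerate exactly which combinations of single and crossing chords create an $m$-cycle, so that the forbidden set is large enough to force $|E(G)|\le \frac{n^2}{4}$ yet controlled enough that the double count does not leak; and then to prove that saturating the bound pins $G$ down to the \emph{balanced} complete bipartite graph rather than merely to some dense near-bipartite graph. Treating the whole range of missing lengths $m$ uniformly, and cleanly separating the $n$ even and $n$ odd cases in the rigidity step, is where the careful bookkeeping will be concentrated.
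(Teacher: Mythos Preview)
The paper does not prove this theorem. It is quoted verbatim from Bondy's 1971 paper as one of the ``well-known results'' needed for the proof of Theorem~\ref{y}, and is used there as a black box (together with Lemmas~\ref{Lemma-Bondy} and~\ref{Lemma-Faudree}). There is therefore no proof in the paper to compare your proposal against.

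For what it is worth, your sketch is broadly in the spirit of Bondy's original argument, but it diverges from it in one structural respect: Bondy does not attempt to handle an arbitrary missing length $m$ directly. His proof is by induction on $n$, reducing to the case where the missing length is $n-1$. In that case a single clean exclusion suffices: along the Hamilton cycle $v_1v_2\cdots v_nv_1$, if $v_1v_{j+1}\in E(G)$ and $v_nv_{j-1}\in E(G)$ then one builds an $(n-1)$-cycle avoiding $v_j$, so these two adjacencies are mutually exclusive; this immediately yields $d(v_1)+d(v_n)\le n$, and summing over all consecutive pairs gives $|E(G)|\le n^2/4$. The equality analysis and the characterisation of $K_{n/2,n/2}$ then come out of the induction. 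Your plan of forbidding single-chord spans and crossing-chord span-sums for a general $m$ would require substantially more bookkeeping than this, and the ``double count over consecutive pairs'' you describe does not obviously close without the reduction to $m=n-1$; so if you pursue an independent write-up, I would recommend following Bondy's inductive route rather than the uniform-$m$ route you outlined.
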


The following lemma is stated in the proof of Theorem \ref{bondy}, see \cite{Bondy}.

\begin{lemma}{\textup {(Bondy \cite{Bondy})}}\label{Lemma-Bondy}
Let $G$ be a hamiltonian graph of order $n$ with a Hamilton
cycle $u_1u_2\dots u_nu_1$. If $d(u_1)+d(u_n)>n$, then $G$ is pancyclic.
\end{lemma}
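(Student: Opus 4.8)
The plan is to establish the stronger conclusion that $G$ contains a cycle of every length $\ell$ with $3\le \ell\le n$, which is exactly pancyclicity. The length $n$ is realised by the given Hamilton cycle $C=v_1v_2\cdots v_nv_1$, so fix $\ell$ with $3\le \ell\le n-1$ and suppose, towards a contradiction, that $G$ has no cycle of length $\ell$. The elementary ingredient is that a chord of $C$ at $v_1$ cuts $C$ into two cycles through that chord: if $v_1v_j\in E(G)$ then $v_1v_2\cdots v_jv_1$ and $v_1v_jv_{j+1}\cdots v_nv_1$ have lengths $j$ and $n-j+2$; symmetrically, a chord $v_iv_n\in E(G)$ yields cycles of lengths $i+1$ and $n-i+1$. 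Hence the non-existence of an $\ell$-cycle already forces $v_1\not\sim v_\ell$, $v_1\not\sim v_{n-\ell+2}$, $v_n\not\sim v_{\ell-1}$ and $v_n\not\sim v_{n-\ell+1}$.

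To bring $d(v_1)$ and $d(v_n)$ into play simultaneously I would use pairs of chords, one at $v_1$ and one at $v_n$. If $v_1v_j,\,v_iv_n\in E(G)$ with $i\neq j$ and $i,j\notin\{1,n\}$, then the subgraph $C\cup\{v_1v_j,v_iv_n\}$ contains, besides the cycles above and $C$ itself, a cycle of length $\lvert j-i\rvert+3$, and, in case the two chords cross on $C$ (that is, when $i<j$), also a cycle of length $n+1-(j-i)$. Writing $P=\{j:v_1v_j\in E(G)\}$ and $Q=\{i:v_iv_n\in E(G)\}$, the assumed absence of an $\ell$-cycle then forbids the difference $\lvert j-i\rvert$ from equalling $\ell-3$ for any such pair, and the difference $j-i$ from equalling $n+1-\ell$ for any crossing pair, on top of the four forbidden positions above. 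I would now argue that these restrictions force $P$ and $Q$ to be small enough that $d(v_1)+d(v_n)=\lvert P\rvert+\lvert Q\rvert\le n$, contradicting the hypothesis. For $\ell$ near $n$, or near $3$, the single-chord restrictions already give this count by a direct pigeonhole on the $n-1$ available positions $\{2,\dots,n\}$ (respectively $\{1,\dots,n-1\}$); the crossing-chord restrictions are needed to cover the remaining values of $\ell$.

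The step I expect to be the main obstacle is exactly this counting for $\ell$ in the middle of the range: one must juggle the forbidden positions together with the two forbidden differences and verify that the totals really do cap $d(v_1)+d(v_n)$ at $n$ for \emph{every} $\ell$, not merely the extremal ones. An equivalent route is a descending induction: from a cycle of length $\ell+1$ carrying an edge whose endpoints have many neighbours on it, delete a vertex not adjacent to both endpoints and reconnect through a chord at an endpoint, producing a cycle of length $\ell$ that still carries such an edge; the bad case is when the only available deletion removes a common neighbour of the two endpoints. In all formulations the strict inequality $d(v_1)+d(v_n)>n$, as opposed to $=n$, provides a slack of one that is precisely what is consumed when a vertex adjacent to both endpoints must be removed, and it is also what prevents the graph $K_{n/2,n/2}$ — in which every edge of the Hamilton cycle has endpoint degree-sum exactly $n$ — from arising as an exception, this being the sole exception in Bondy's pancyclicity theorem (\Cref{bondy}).
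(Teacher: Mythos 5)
The paper offers no proof of this lemma at all: it is quoted from Bondy's paper \cite{Bondy}, where it occurs inside the proof of \Cref{bondy}. So the only question is whether your sketch constitutes a self-contained proof, and it does not. Your preparatory observations are all correct: a chord $v_1v_j$ gives cycles of lengths $j$ and $n-j+2$, a chord $v_iv_n$ gives lengths $i+1$ and $n-i+1$, and the two-chord configurations give lengths $|j-i|+3$ and, for crossing chords, $n+1-(j-i)$. But the lemma \emph{is} the counting step that you explicitly defer (``I would now argue that these restrictions force $P$ and $Q$ to be small enough\dots'', ``the step I expect to be the main obstacle is exactly this counting for $\ell$ in the middle of the range''). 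Deducing $|P|+|Q|\le n$ from the forbidden positions and forbidden differences is the entire content of the statement, and it is not a routine pigeonhole: for abstract sets, the constraint that no pair $j\in P$, $i\in Q$ has $|j-i|=\ell-3$ does not by itself bound $|P|+|Q|$ by $n$ (take $P$ and $Q$ inside a common interval of diameter less than $\ell-3$); ruling such configurations out requires playing the second forbidden difference $n+1-\ell$ and the boundary exclusions against each other, which is precisely the bookkeeping you have not done. The standard argument instead assigns to each neighbour $v_j$ of $v_1$ a specific non-neighbour of $v_n$ (essentially $v_{j-\ell+3}$, with a second cycle type handling the wrap-around) and verifies that this assignment is injective; the delicate part is exactly the wrap-around and endpoint coincidences for middle values of $\ell$ that your sketch leaves open.

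Your alternative route via descending induction is in the same state: you name its ``bad case'' (when the only removable vertex is a common neighbour of the two endpoints) without resolving it, and the closing remark that the slack of one in $d(v_1)+d(v_n)>n$ ``is precisely what is consumed'' is a plausibility statement, not an argument. As it stands the proposal is a correct framing of the known proof strategy with the decisive step missing; to make it a proof you must either carry out the injective correspondence between $N(v_1)$ and the complement of $N(v_n)$ for every $\ell$ with $3\le\ell\le n-1$, or simply cite \cite{Bondy} as the paper does.
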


\begin{lemma}{\textup {(Faudree et al. \cite{Faudree})}}\label{Lemma-Faudree}
Let $G$ be a hamiltonian graph of order $n$. If $|E(G)|>\left(\frac{n-1}{2}\right)^2-1$, then $G$ is pancyclic or bipartite.
\end{lemma}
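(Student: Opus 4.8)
The plan is to prove the statement in the contrapositive-flavored form: assuming $G$ is hamiltonian with $|E(G)| > \left(\frac{n-1}{2}\right)^2 - 1$, I would show that $G$ is pancyclic unless it is bipartite. First I would record the identity $\left(\frac{n-1}{2}\right)^2 - 1 = \frac{(n-3)(n+1)}{4}$ and split on the edge count. If $|E(G)| \ge \frac{n^2}{4}$, then Theorem~\ref{bondy} applies directly: $G$ is either pancyclic or $G \cong K_{n/2,n/2}$, and the latter is bipartite, so the conclusion holds. Hence it suffices to treat the narrow band $\left(\frac{n-1}{2}\right)^2 - 1 < |E(G)| < \frac{n^2}{4}$, where I would assume for contradiction that $G$ is neither pancyclic nor bipartite.

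Next I would fix a Hamilton cycle $C = v_1 v_2 \cdots v_n v_1$ (indices mod $n$) and exploit Bondy's Lemma~\ref{Lemma-Bondy}. If some pair of consecutive vertices satisfied $d(v_i) + d(v_{i+1}) > n$, then $G$ would already be pancyclic; so under our assumption $d(v_i) + d(v_{i+1}) \le n$ for every $i$. Writing $s_i = n - d(v_i) - d(v_{i+1}) \ge 0$ and summing around $C$ (each degree is counted twice) gives
\[
\sum_{i=1}^{n} s_i = n^2 - 4|E(G)| < n^2 - \left((n-1)^2 - 4\right) = 2n + 3 .
\]
Thus the total slack is strictly less than $2n+3$: the consecutive degree sums are forced to be very close to the extremal value $n$, and this near-tightness is the quantitative engine for the rest of the argument.

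With this in hand I would run a descending analysis on cycle lengths. Starting from $C$ (length $n$), I would try to build a cycle of each length $\ell$ from $n-1$ down to $3$ by rerouting along chords: a chord of a cycle $C_\ell$ joining two vertices at cyclic distance two produces a $C_{\ell-1}$, and short chords more generally create the full spread of smaller lengths. The failure to realize some length $k$ forbids entire families of chords, and I would convert these forbidden configurations into an upper bound on $|E(G)|$. The aim is to show this bound cannot exceed $\left(\frac{n-1}{2}\right)^2 - 1$ unless the forbidden-chord pattern is exactly that imposed by a balanced (near-)complete bipartite graph; in the latter situation the near-vanishing slack from the previous step pins $G$ down to a genuinely bipartite graph, contradicting our assumption. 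For odd $n$ the bipartite possibility is vacuous, since a hamiltonian bipartite graph must have equal parts and hence even order, so there the conclusion is simply that $G$ is pancyclic.

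The main obstacle is precisely this last extremal analysis: translating ``no $k$-cycle'' into a sharp edge bound with the exact constant $\left(\frac{n-1}{2}\right)^2 - 1$, and proving that the only hamiltonian, non-pancyclic graphs meeting or exceeding the threshold are bipartite. The delicate points are the additive $-1$ in the threshold and the simultaneous bookkeeping of the slack $\sum_{i} s_i < 2n+3$ with the chord restrictions, so that no constant is lost along the way; managing the parity of $n$ and the transition between the ``dense'' case governed by Theorem~\ref{bondy} and the ``band'' case governed by Lemma~\ref{Lemma-Bondy} also requires care.
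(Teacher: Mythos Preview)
The paper does not prove this lemma at all: it is quoted verbatim as a result of Faudree, H\"aggkvist, and Schelp \cite{Faudree} and used as a black box in the proof of Theorem~\ref{y}. There is therefore no ``paper's own proof'' to compare against.

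As for your proposal on its own merits, it is not a proof but a plan with an explicitly acknowledged hole. Your steps (1)--(3) are fine: splitting off the range $|E(G)|\ge n^2/4$ via Theorem~\ref{bondy}, and then using Lemma~\ref{Lemma-Bondy} to force $d(v_i)+d(v_{i+1})\le n$ around the Hamilton cycle, yielding $\sum_i s_i = n^2-4|E(G)| < 2n+3$. But everything after that is aspirational. The ``descending analysis on cycle lengths'' is not carried out; you only describe what you would like to happen (forbidden chord patterns translating into the exact edge bound $\left(\tfrac{n-1}{2}\right)^2-1$, with the bipartite case as the unique equality/near-equality configuration). You yourself identify this as ``the main obstacle'' and note that the additive $-1$ is delicate. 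That is the entire content of the lemma, and nothing in your write-up shows how to obtain it. In particular, the slack bound $\sum s_i<2n+3$ is too coarse by itself to pin down the structure: it allows, for instance, a single consecutive pair with degree sum as low as $n-(2n+2)$, which is vacuous, so the real work must come from the chord-counting you have not done.

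If you want to actually prove this, you will need to reproduce (or find an alternative to) the argument in \cite{Faudree}, which involves a careful case analysis on the smallest missing cycle length and a direct edge count; the reduction to Bondy's theorem and lemma that you set up is a reasonable first move but is not where the difficulty lies.
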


\begin{proof}[Proof of Theorem \ref{y}]
If $G\cong K_{n-\omega}\vee (K_{2\omega-n}\cup \overline{K_{n-\omega}})$ with $\frac{n+1}{2}\le \omega\le n-2$, then $G$ is not pancyclic. Suppose in the following that $G\ncong K_{n-\omega}\vee (K_{2\omega-n}\cup \overline{K_{n-\omega}})$ with $\frac{n+1}{2}\le \omega\le n-2$. Then by Corollary~\ref{cor1}, $G$ has a Hamilton cycle, say $u_1u_2\dots u_nu_1$.

If $\delta\ge \frac{n}{2}$, then $|E(G)|\ge \frac{n^2}{4}$ and so by Theorem~\ref{bondy}, $G$ is either pancyclic or else is the complete bipartite graph $K_{n/2,n/2}$. Combining with the condition that $\delta+\omega\ge n$, we have $G$ is pancyclic unless $G\cong K_{2,2}$.
So we assume in the following that $\delta\le \frac{n-1}{2}$. Then $\omega\ge \frac{n+1}{2}$, which implies that $G$ is non-bipartite. Let $K$ be a maximum clique in $G$. Then there exists an integer $i$ such that $u_i,u_{i+1}\in K$ and $u_{i-1}\notin K$, where $i$ modules $n$. Thus $d(u_i)\ge \omega$ and $d(u_{i+1})\ge \omega-1$ and hence
\[
d(u_i)+d(u_{i+1})\ge 2\omega-1\ge n.
\]
If $d(u_i)+d(u_{i+1})>n$, then $G$ is pancyclic by Lemma~\ref{Lemma-Bondy}.
If $d(u_i)+d(u_{i+1})=n$, then $\omega=\frac{n+1}{2}$ and so  $\delta=\frac{n-1}{2}$. Therefore
\[
|E(G)|\ge \frac{n(n-1)}{4}>\left(\frac{n-1}{2}\right)^2-1,
\]
which follows by Lemma~\ref{Lemma-Faudree} that $G$ is pancyclic. This completes the proof.
\end{proof}

\section{Proofs of Theorems \ref{Theorem-orepath} and \ref{Theorem-path} about paths}

We give some additional terminology and notation first.

Let $G$ be a graph on $n$ vertices and $k\ge 2$ be an integer. We call a sequence of vertices $P=v_1v_2\dots v_k$ an ore $(v_1,v_k)$-path of $G$, if for
all $i$ with $1\le i \le k-1$, either $v_iv_{i+1}\in E(G)$ or $d(v_i)+d(v_{i+1})\ge n+1$.
The deficit degree of the ore path $P$ is defined by $\textup{def}(P)=|\{i:v_iv_{i+1}\notin E(G)~\textup{with}~1\le i\le k-1\}|$. Thus a path is an ore path with deficit degree $0$. 
The concept of ore path was raised by Li and Zhang in \cite{LZ}, which was raised with the concept of ore cycle. 

Now, we prove the following lemma on ore path.
\begin{lemma}\label{Lemma-orepath}
Let $G$ be a graph of order $n$ and let $u,v$ be two vertices with $d(u)+d(v)\ge n+1$. If $P$ is an ore $(u,v)$-path of $G$, then there exists a $(u,v)$-path of $G$ which contains all the vertices of $P$.
\end{lemma}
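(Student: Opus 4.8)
The plan is to induct on $\operatorname{def}(P)$, the deficit degree of the ore $(u,v)$-path $P=v_1v_2\dots v_k$ with $v_1=u$, $v_k=v$. If $\operatorname{def}(P)=0$, then $P$ is already a genuine $(u,v)$-path and we are done. So suppose $\operatorname{def}(P)\ge 1$ and pick an index $j$ with $v_jv_{j+1}\notin E(G)$; by definition of an ore path this forces $d(v_j)+d(v_{j+1})\ge n+1$. The aim is to modify $P$ into a new ore $(u,v)$-path on the \emph{same} vertex set, with strictly smaller deficit degree, and then invoke the induction hypothesis. (One subtlety: we must be careful that the chosen "bad" edge is not at the very ends in a way that destroys the endpoint condition; but since $d(v_1)+d(v_k)=d(u)+d(v)\ge n+1$ is given and the induction is on the whole path, the endpoint hypothesis is preserved by any rearrangement that keeps $v_1$ and $v_k$ fixed.)

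The heart of the argument is a rotation/crossover step at the non-edge $v_jv_{j+1}$. Consider the two "halves" $A=\{v_1,\dots,v_j\}$ and $B=\{v_{j+1},\dots,v_k\}$. Since $v_j$ and $v_{j+1}$ together have at least $n+1$ neighbours counted with multiplicity among the $n$ vertices of $G$, there is a vertex $w$ adjacent to both (this is the standard Ore-type pigeonhole); moreover we can arrange that $w$ is a vertex \emph{internal} to the path sitting on one side, say $w=v_i$ with $i<j$, so that $v_i\in N(v_j)$ and its successor $v_{i+1}\in N(v_{j+1})$ — the usual trick of looking at $N(v_j)$ and $N^+(v_{j+1})$ (successors along $P$ on the same segment) and using $|N(v_j)|+|N^+(v_{j+1})|>n$ to find an overlap. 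Then replace the bad edge by performing the reversal
\[
v_1\dots v_i\,v_j\,v_{j-1}\dots v_{i+1}\,v_{j+1}\dots v_k,
\]
i.e. flip the block $v_{i+1}\dots v_j$. This uses the new edges $v_iv_j$ and $v_{i+1}v_{j+1}$ (both in $E(G)$), destroys the non-edge $v_jv_{j+1}$, and creates no new non-edge except possibly where the reversed block reattaches — but by choice of $w$ those attachments are genuine edges. All other consecutive pairs are inherited from $P$, so they are still edges or still satisfy the degree sum $\ge n+1$. Hence the new sequence is an ore $(u,v)$-path on the same vertex set with $\operatorname{def}$ decreased by at least one.

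I expect the main obstacle to be the case analysis in locating the crossover vertex $w$: one has to handle the possibility that $N(v_j)$ and $N^+(v_{j+1})$ overlap only at $v_j$ or $v_{j+1}$ themselves, or that the neighbours sit on the "wrong" side of the non-edge, which may require looking at predecessors instead of successors, or at the $B$-side instead of the $A$-side. A clean way to package this is to set $X=N(v_{j+1})\cap A$ and look at $X^{+}$ (successors along $P$); since $d(v_{j+1})\ge n+1-d(v_j)$ and $v_j\notin N(v_{j+1})$, one gets $|X^{+}\cup N_A(v_j)|$ exceeding $|A|$ unless $A$ is small, in which case one symmetrically works inside $B$ — and the edge case where both sides are too small cannot occur because $|A|+|B|=k\le n$ together with the degree sum $\ge n+1$ gives enough room. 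Writing this dichotomy carefully, while keeping track of which consecutive pairs of the rearranged sequence remain ore-admissible, is the only delicate point; everything else is bookkeeping and the induction closes immediately.
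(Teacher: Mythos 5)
Your overall strategy (kill the non-edge by a rotation and induct on the deficit) has the right skeleton, but there is a genuine gap at the pigeonhole step, and it comes from your insistence that the new ore path live on the \emph{same} vertex set as $P$. The hypothesis $d(v_j)+d(v_{j+1})\ge n+1$ counts neighbours in all of $G$, whereas an ore path need not be spanning; hence $|N(v_j)\cap V(P)|+|N_P^{+}(v_{j+1})|$ can be far smaller than $|V(P)|$, and no crossover pair $v_i\in N(v_j)$, $v_{i+1}\in N(v_{j+1})$ \emph{on the path} is guaranteed. Your claim that ``we can arrange that $w$ is internal to the path'' is unjustified, and your remark that the case where both sides are too small ``cannot occur'' is false when $k=|V(P)|$ is much smaller than $n$: all of the degree mass of $v_j$ and $v_{j+1}$ may sit off the path. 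Worse, the same-vertex-set version of the statement is simply untrue: take $P=uv$ with $uv\notin E(G)$ and $d(u)+d(v)\ge n+1$; there is no $(u,v)$-path on $\{u,v\}$ at all, and any repair must insert an outside vertex. The lemma only asks for a path \emph{containing} $V(P)$, and that freedom is essential.

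The paper's proof handles exactly this point with a dichotomy your plan is missing. It takes an ore $(u,v)$-path $Q$ containing all vertices of $P$ with minimum deficit (so $Q$ may already be larger than $P$) and, at a non-edge $v_iv_{i+1}$ of $Q$, first asks whether $v_i$ and $v_{i+1}$ have a common neighbour outside $Q$: if yes, inserting that vertex between them decreases the deficit (the vertex set grows, which is allowed); if no, the off-path neighbourhoods are disjoint, which forces $d_Q(v_i)+d_Q(v_{i+1})\ge |V(Q)|+1$, and only then does your crossover/rotation argument (the set $N'$ of successors of $N_Q(v_i)$ meeting $N_Q(v_{i+1})$) go through inside $Q$. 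If you replace ``same vertex set'' by ``still contains $V(P)$'' and add the insertion case, your induction on the deficit closes and becomes essentially the paper's extremal argument.
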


\begin{proof}
Assume to the contray that there is no such $(u,v)$-path. Let $Q$ be an ore $(u,v)$-path that contains all vertices of $P$ such that $\textup{def}(Q)$ is as small as possible. By assumption, $\textup{def}(Q)\ge 1$. Let $Q=v_1\dots v_k$ with $v_1=u$ and $v_k=v$. Then $v_iv_{i+1}\notin E(G)$ for some $i$ with $1\le i\le k-1$. 

We claim that $N_{G-V(Q)}(v_i)\cap N_{G-V(Q)}(v_{i+1})=\emptyset$. Otherwise, there exists a common neighbor, say $x$, of $v_i$ and $v_{i+1}$ outside $Q$, which follows that $Q'=v_1Qv_ixv_{i+1}Qv_k$ is an ore $(u,v)$-path which contains all the vertices in $V(P)$ with deficit degree smaller than $\textup{def}(Q)$, a contradiction.

Recall that $d(v_i)+d(v_{i+1})\ge n+1$. Then
\[
d_Q(v_i)+d_Q(v_{i+1})\ge d(v_i)+d(v_{i+1})-|V(G)\setminus V(Q)|\ge k+1.
\] 
Let $N'=\{v_{j+1}:v_iv_j\in E(G),j\le k-1\}$. We have $|N'|\ge d_Q(v_i)-1$ and $N'\cup N_Q(v_{i+1})\subseteq V(Q)\setminus\{v_{i+1}\}$. It then follows by principle of inclusion-exclusion that 
\[
|N'\cap N_Q(v_{i+1})|\ge |N'|+d_Q(v_{i+1})-(k-1)\ge d_Q(v_i)-1+d_Q(v_{i+1})-(k-1)\ge 1,
\]
which shows that there exists an integer $j$ with $1\le j\le k-1$ such that $v_{j}v_i,v_{j+1}v_{i+1}\in E(G)$. 
Then either $j\le i-2$ or $j\ge i+2$ and so either \[
v_1Pv_jv_iPv_{j+1}v_{i+1}Pv_k
\]
or
\[
v_1Pv_iv_jPv_{i+1}v_{j+1}Pv_k
\]
is an ore $(u,v)$-path which contains all the vertices of $P$ with deficit degree smaller than $\textup{def}(Q)$, a contradiction.
\end{proof}


Theorem~\ref{Theorem-orepath} follows from the above lemma immediately. Also, the well-known result \cite{Ore} follows.

\begin{theorem}
Let $G$ be a graph of order $n$. If $d(u)+d(v)\ge n+1$ for every non-adjacent vertex pair $u,v$, then $G$ is hamiltonian-connected.
\end{theorem}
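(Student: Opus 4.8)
The plan is to produce, for every pair of distinct vertices $u,v$, a Hamilton $(u,v)$-path of $G$. Fix such a pair and let $w_1,\dots,w_{n-2}$ be an arbitrary enumeration of $V(G)\setminus\{u,v\}$; consider the sequence $P=uw_1w_2\cdots w_{n-2}v$. For any two consecutive vertices $x,y$ of $P$, either $xy\in E(G)$ or else $x,y$ is a non-adjacent pair, in which case $d(x)+d(y)\ge n+1$ by hypothesis; hence $P$ is an ore $(u,v)$-path of $G$ visiting every vertex. If in addition $d(u)+d(v)\ge n+1$---which holds in particular whenever $uv\notin E(G)$---then Lemma~\ref{Lemma-orepath}, applied to this $P$, yields a genuine $(u,v)$-path through all of $V(P)=V(G)$, that is, a Hamilton $(u,v)$-path, and we are done.

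The only situation left uncovered is $uv\in E(G)$ together with $d(u)+d(v)\le n$, where the endpoint-degree hypothesis of Lemma~\ref{Lemma-orepath} fails. The cleanest resolution is to observe that the \emph{proof} of Lemma~\ref{Lemma-orepath} never uses the assumption $d(u)+d(v)\ge n+1$: the only degree estimate invoked there is $d(v_i)+d(v_{i+1})\ge n+1$ for an interior non-edge $v_iv_{i+1}$ of the auxiliary ore path, and this is part of the definition of an ore path. Thus the conclusion of that lemma holds for any ore $(u,v)$-path with no extra hypothesis, and the construction of the previous paragraph applies to the residual case verbatim. If one prefers to use Lemma~\ref{Lemma-orepath} exactly as stated, an alternative (for $n\ge 3$; the cases $n\le 2$ are immediate) is to first note $\delta(G)\ge 2$---a vertex of degree $1$ would have a non-neighbour of degree $\ge n$, which is impossible---and then, since $G$ is hamiltonian by Ore's theorem, take a Hamilton cycle $C$: if $uv$ is an edge of $C$, deleting it gives the required path, and otherwise one reroutes $C$ near $u$ and near $v$, using the edge $uv$ together with the large degrees forced on the non-neighbours of $u$ and of $v$ (each non-neighbour $a$ of $u$ satisfies $d(a)\ge n+1-d(u)$), so that $u$ and $v$ become the two endpoints of a Hamilton path.

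The step I expect to be the genuine obstacle is precisely this adjacent, low-degree-sum case, as it is the only point where the ``spanning ore path plus Lemma~\ref{Lemma-orepath}'' reduction does not literally apply; it must be dispatched either through the remark that the lemma's endpoint hypothesis is superfluous, or through a Dirac--Ore-type rotation/insertion argument on a Hamilton cycle of $G$. The remainder of the proof is the elementary construction described in the first paragraph.
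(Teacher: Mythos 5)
Your proof is correct and takes essentially the paper's route: the paper simply asserts this theorem ``follows'' from Lemma~\ref{Lemma-orepath}, the intended argument being exactly your first paragraph (an arbitrary ordering of $V(G)$ from $u$ to $v$ is a spanning ore $(u,v)$-path, to which the lemma is applied). Your extra care about the case $uv\in E(G)$ with $d(u)+d(v)\le n$ addresses the one point the paper glosses over, and your observation is accurate --- the proof of Lemma~\ref{Lemma-orepath} only ever uses the degree-sum bound for consecutive non-adjacent pairs of the ore path, never for the endpoints --- so that fix is sound and the alternative Hamilton-cycle rerouting sketch is not needed.
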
 

Now, we are ready to prove Theorem \ref{Theorem-path}.

\begin{proof}[Proof of Theorem \ref{Theorem-path}]
If $n-\omega+1\ge \frac{n+1}{2}$, then the result follows by Theorem \ref{Theorem-orepath}. Suppose in the following that $n-\omega+1< \frac{n+1}{2}$, that is, $\omega>\frac{n+1}{2}$. It is trivial if $\omega=n$. So we may assume in the following that $\frac{n+2}{2}\le \omega\le n-1$.

Suppose to the contrary that there is no such $(u,v)$-path. Let $P$ be a $(u,v)$-path of $G$ such that $P$ contains vertices of degree at least $n-\omega+1$ as many as possible. 
Let $R=V(G)\setminus V(P)$ and $K$ be a largest clique in $G$.

 
\begin{claim}\label{newc}
There are at least two vertices of $K$ on $P$.
\end{claim}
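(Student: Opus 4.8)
\medskip
\noindent\emph{Sketch of the intended argument.}
The plan is to reduce the claim to the single inequality $|V(P)|\ge\omega$. Granting this, inclusion--exclusion gives
\[
|V(P)\cap K|=|V(P)|+|K|-|V(P)\cup K|\ge\omega+\omega-n=2\omega-n\ge 2,
\]
the last step because $\omega\ge\frac{n+2}{2}$; this is exactly the assertion of the claim. (This parallels the estimate in Claim~\ref{newc1}, except that there the bound produced only one vertex of $K$ on $C$ and $2$-connectivity had to be invoked afterwards, whereas here the two fixed endpoints of prescribed degree will hand us the stronger conclusion at no extra cost.)

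To prove $|V(P)|\ge\omega$ I would exhibit \emph{some} $(u,v)$-path $Q$ that passes through every vertex of $K$. Since $\omega\ge\frac{n+2}{2}$, each vertex of $K$ has degree at least $|K|-1=\omega-1\ge n-\omega+1$, so such a $Q$ contains at least $\omega$ vertices of degree at least $n-\omega+1$; by the maximality in the choice of $P$, so does $P$, whence $|V(P)|\ge\omega$.

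Everything thus comes down to constructing $Q$, which is the only step needing care. The key observation is that $u$ (and symmetrically $v$) is ``attached'' to $K$: if $u\notin K$ then, since $|V(G)\setminus K|=n-\omega$, we have $|N(u)\cap K|\ge d(u)-(n-\omega-1)\ge 2$, while if $u\in K$ then $u$ is joined to every vertex of $K\setminus\{u\}$; in particular $u$ and $v$ lie in the component of $K$, so $(u,v)$-paths really do exist. Hence we may choose $a\in K$ with $a\in\{u\}\cup N(u)$ and $b\in K$ with $b\in\{v\}\cup N(v)$, and a short case check (using the degree bounds and $u\ne v$) lets us take $a\ne b$. Since $K$ is complete, it has a Hamilton path from $a$ to $b$; prefixing the edge $ua$ when $u\notin K$ and suffixing $bv$ when $v\notin K$ produces a $(u,v)$-path through all of $K$ whose vertices are pairwise distinct. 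The only genuine obstacle is the bookkeeping: organising the handful of cases so that $a\ne b$ and $Q$ has no repeated vertex. The degree count and the inclusion--exclusion step are immediate.
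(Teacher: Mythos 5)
Your proposal is correct and follows essentially the same route as the paper: show $u$ and $v$ each have at least two neighbours in $K$ (or lie in $K$), build a $(u,v)$-path through all of $K$, use the maximality of $P$ to lower-bound $|V(P)|$, and finish by inclusion--exclusion. The only cosmetic difference is that the paper pushes the bound to $|V(P)|\ge\omega+1$ (yielding three vertices of $K$ on $P$ when $u\notin K$), while your bound $|V(P)|\ge\omega$ already suffices for the two vertices the claim asserts.
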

\begin{proof}
It is trivial if $u,v\in K$. Assume in the following that $u\notin K$.
Then we have \[
|N_K(u)|=|N(u)|+|K|-|N(u)\cup K|\ge n-\omega+1+\omega-(n-1)=2.
\]
If $v\notin K$, then $|N_K(v)|\ge 2$. Let $z_1$ and $z_2$ be two distinct vertices such that $z_1\in N_K(u)$ and $z_2\in N_K(v)$. Then $uz_1x_1\dots x_{\omega-2}z_2v$ is a $(u,v)$-path containing vertices of degree at least $n-\omega+1$, where $x_1,\dots,x_{\omega-2}$ are vertices of $K\setminus\{z_1,z_2\}$. 
If $v\in K$, then $uy_1y_2\dots y_{\omega-1}v$ is a $(u,v)$-path containing vertices of degree at least $n-\omega+1$, where $y_1\in N_K(u)\setminus\{v\}$, $y_2,\dots,y_{\omega-1}$ are vertices of $K\setminus\{y_1,v\}$. This shows that 
$|V(P)|\ge \omega+1$ in both cases by the assumption of $P$. As $\omega\ge \frac{n+2}{2}$, we have \[
|V(P)\cap K|=|V(P)|+|K|-|V(P)\cup K|\ge \omega+1+\omega-n\ge 3,
\]
as desired.
\end{proof}

\begin{claim}\label{newcc}
For any vertex $x$ outside $P$, $N_P^+(x)$ and $N_P^-(x)$ are independent. 
\end{claim}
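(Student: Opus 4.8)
The statement is the $(u,v)$-path analogue of Claim~\ref{newc2}, so the plan is to mirror that rerouting (``detour'') argument, with $P=uPv$ playing the role of the cycle $C$ and the maximality of $P$ (it contains as many vertices of degree at least $n-\omega+1$ as possible) playing the role of the maximality of $C$. Since $P$ has a natural left--right symmetry under reading it from $v$ to $u$, it suffices to prove that $N_P^+(x)$ is independent; the independence of $N_P^-(x)$ then follows either by the same argument on the reversed path or by writing down the symmetric detour explicitly.

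\textbf{Independence of $N_P^+(x)$.} Suppose not. Then there are $z_1,z_2\in N_P(x)$ with $z_1\ne z_2$, both distinct from $v$ (so that $z_1^+,z_2^+$ are defined), and $z_1^+z_2^+\in E(G)$; we may assume $z_1$ precedes $z_2$ on $uPv$. The key step is to produce a $(u,v)$-path whose vertex set is exactly $V(P)\cup\{x\}$, namely
\[
uPz_1\,x\,z_2Pz_1^+\,z_2^+Pv :
\]
follow $P$ from $u$ to $z_1$, use the edges $z_1x$ and $xz_2$, traverse the segment of $P$ from $z_2$ back to $z_1^+$, use the edge $z_1^+z_2^+$, and finish along $P$ from $z_2^+$ to $v$. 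Because $z_1$ precedes $z_1^+\preceq z_2$ precedes $z_2^+$ on $P$, the four pieces $uPz_1$, $z_2Pz_1^+$, $z_2^+Pv$ together with $x$ are pairwise disjoint and cover $V(P)\cup\{x\}$, so this is indeed a $(u,v)$-path, and it contains more vertices of degree at least $n-\omega+1$ than $P$ does, contradicting the choice of $P$. (When $z_1^+=z_2$ the segment $z_2Pz_1^+$ is a single vertex and the construction degenerates to simply inserting $x$ between the consecutive vertices $z_1,z_1^+$ of $P$, which is still a legitimate $(u,v)$-path on $V(P)\cup\{x\}$.) Hence $N_P^+(x)$ is independent.

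\textbf{Independence of $N_P^-(x)$.} Symmetrically, if $z_1^-z_2^-\in E(G)$ for $z_1,z_2\in N_P(x)$ distinct from $u$ with $z_1$ before $z_2$ on $uPv$, then
\[
uPz_1^-\,z_2^-Pz_1\,x\,z_2Pv
\]
is a $(u,v)$-path on $V(P)\cup\{x\}$, again contradicting maximality; alternatively one just applies the previous paragraph to $P$ read from $v$ to $u$.

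\textbf{Expected difficulty.} I do not anticipate a real obstacle: this is a routine detour argument. The only points requiring care are bookkeeping ones --- checking that $z_1,z_2$ are distinct from the relevant endpoint of $P$ so that all ``$+$''/``$-$'' vertices exist, handling the degenerate cases in which a rerouted segment collapses to a single vertex or the ``new'' edge $z_1^+z_2^+$ already lies on $P$, and verifying that the displayed walk is actually a path (pairwise disjointness of its pieces), all of which follow directly from the relative order of $z_1,z_2$ on $uPv$.
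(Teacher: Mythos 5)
Your proposal is correct and is essentially the paper's own argument: the detour $uPz_1xz_2Pz_1^+z_2^+Pv$ is exactly the path the authors construct, contradicting the maximality of $P$, and the independence of $N_P^-(x)$ is handled by the same symmetric rerouting. Your extra bookkeeping (degenerate case $z_1^+=z_2$, disjointness of the segments) is fine but does not change the method.
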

\begin{proof}
To the contrary, assume that $z_1^+z_2^+\in E(G)$ with $z_1,z_2\in N_P(x)$ in order on $P$, then 
\[
uPz_1xz_2Pz_1^+z_2^+Pv
\]
is a $(u,v)$-path containing more vertices of degree at least $n-\omega+1$ in $G$, a contradiction. This shows that $N_P^+(x)$ is independent. Similarly, we have $N_P^-(x)$ is independent, as desired.
\end{proof}

\begin{claim}\label{c2}
Each vertex in $K$ is contained in $P$, or $G\cong K_{ \frac{n}{2}}\vee \overline{K_{\frac{n}{2}}}$ or $K_2\vee 2K_{\frac{n}{2}-1}$.
\end{claim}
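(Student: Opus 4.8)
The plan is to argue by contradiction: suppose some vertex of $K$ lies in $R:=V(G)\setminus V(P)$, and deduce that $G$ must be one of the two listed graphs. Call a vertex \emph{good} if its degree is at least $n-\omega+1$; since $\omega\ge\frac{n+2}{2}$, every vertex of $K$ has degree at least $\omega-1\ge n-\omega+1$ and is therefore good. Put $\omega_1=|K\cap R|\ge 1$ and $\omega_2=|K\cap V(P)|$; by Claim~\ref{newc}, $\omega_2\ge 2$. The first step is a rerouting estimate: any two vertices of $K$ that are consecutive along $P$ have at least $\omega_1$ vertices outside $K$ between them, since otherwise one replaces that segment by the detour through all of $K\cap R$ (legitimate because $K$ is a clique), deleting at most $\omega_1-1$ vertices and splicing in $\omega_1$ good ones, which yields a $(u,v)$-path with strictly more good vertices than $P$. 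Summing over the $\omega_2-1$ internal gaps gives $|V(P)|\ge\omega_2+(\omega_2-1)\omega_1$, so
\[
n\ \ge\ |V(P)|+|K\cap R|\ \ge\ \omega_2+(\omega_2-1)\omega_1+\omega_1\ =\ (\omega-\omega_1)(\omega_1+1).
\]
The right-hand side is concave in $\omega_1$ on $[1,\omega-2]$ with minimum value $2\omega-2$, attained only at $\omega_1=1$ and $\omega_1=\omega-2$; as $\omega\ge\frac{n+2}{2}$ forces $2\omega-2\ge n$, every inequality above is an equality. Hence $n$ is even, $\omega=\frac n2+1$, $(\omega_1,\omega_2)\in\{(1,\omega-1),(\omega-2,2)\}$, each internal gap contains exactly $\omega_1$ vertices outside $K$, the two end segments of $P$ (from $u$ to the first vertex of $K$ on $P$, and from the last vertex of $K$ on $P$ to $v$) contain no vertex outside $K$, $R=K\cap R$, and in particular $u,v\in K$.

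Case $\omega_1=1$. Let $x$ be the unique vertex of $R$, which lies in $K$. Then $P$ alternates between vertices of $K$ and vertices outside $K$, so $P=u\,b_1\,a_2\,b_2\cdots a_{\omega_2-1}\,b_{\omega_2-1}\,v$, where $a_1:=u,a_2,\dots,a_{\omega_2}:=v$ lie in $K$, the $b_i$ do not, and $b_i$ lies between $a_i$ and $a_{i+1}$. By Claim~\ref{newcc}, $N_P^+(x)$ is independent; as $x$ is adjacent to every $a_i$, this set contains every $b_i$, so $\{b_1,\dots,b_{\omega_2-1}\}$ is independent, and moreover $x\not\sim b_i$ for each $i$, since otherwise the adjacent pair $a_{i+1},b_i$ would both lie in $N_P^+(x)$. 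Next, deleting $b_i$ and inserting $x$ between $a_i$ and $a_{i+1}$ gives a valid $(u,v)$-path, so if $d(b_i)<n-\omega+1$ we would contradict the choice of $P$; hence $d(b_i)\ge\frac n2$. Since the only possible neighbours of $b_i$ are $a_1,\dots,a_{\omega_2}$, it follows that $d(b_i)=\frac n2$ and $b_i$ is adjacent to all of them. Therefore $\{a_1,\dots,a_{\omega_2}\}$ is a clique, $\{b_1,\dots,b_{\omega_2-1},x\}$ is an independent set, every edge between the two sets is present, and $G\cong K_{n/2}\vee\overline{K_{n/2}}$.

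Case $\omega_2=2$. Write $P=u\,c_1\,c_2\cdots c_{\omega-2}\,v$ with $c_1,\dots,c_{\omega-2}\notin K$ and $K\cap R=\{z_1,\dots,z_{\omega-2}\}$; since $u,v\in K$ we have $K=\{u,v\}\cup\{z_1,\dots,z_{\omega-2}\}$. The path $P^{*}:=u\,z_1\,z_2\cdots z_{\omega-2}\,v$ passes through all of $K$, hence has $\omega$ good vertices; as $|V(P)|=\omega$ it follows that every vertex of $P$ is good, so $d(c_i)\ge\frac n2$ for all $i$, and $P^{*}$ too attains the maximum. Applying the argument of Claim~\ref{newcc} (valid for any $(u,v)$-path with the maximum number of good vertices) to $P^{*}$ and to each $c_i\notin V(P^{*})$ shows $N_{P^{*}}^{+}(c_i)$ is an independent subset of the clique $V(P^{*})=K$, so $c_i$ has at most two neighbours in $K$; combined with $d(c_i)\ge\frac n2$ and the fact that the $c_i$ are the only vertices outside $K$, this forces $\{c_1,\dots,c_{\omega-2}\}$ to be a clique and each $c_i$ to have exactly two neighbours in $K$. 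Finally, applying Claim~\ref{newcc} to $P$ and a vertex $z_k$, and using that $\{c_1,\dots,c_{\omega-2}\}$ is a clique, shows $z_k$ has no neighbour among the $c_i$; therefore the two neighbours of each $c_i$ in $K$ are $u$ and $v$, and $G\cong K_2\vee(K_{\omega-2}\cup K_{\omega-2})=K_2\vee 2K_{n/2-1}$.

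The core difficulty is that the equality case of the counting bound genuinely splits into two inequivalent configurations, and the configuration $\omega_2=2$ cannot be settled by examining $P$ alone: one must introduce the alternative maximum $(u,v)$-path $P^{*}$ that runs through the whole clique and carry the independence property of Claim~\ref{newcc} over to it. The remainder is careful bookkeeping of the repeated local exchange arguments (deleting a short segment and splicing in good vertices) and of which vertices are good.
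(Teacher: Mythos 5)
Your proof is correct, and its skeleton is the same as the paper's: the same gap-counting bound $|V(P)|\ge\omega_2+(\omega_2-1)\omega_1$, the same equality analysis forcing $n$ even, $\omega=\frac n2+1$ and $(\omega_1,\omega_2)\in\{(1,\omega-1),(\omega-2,2)\}$ with $u,v\in K$, and in the case $\omega_1=1$ essentially the identical argument via Claim~\ref{newcc} and the swap of $x$ for a non-good successor. The only real divergence is in the case $\omega_2=2$: the paper finishes with two direct exchanges on $P$ itself (an edge from $V(P)\setminus\{u,v\}$ to $R$ would allow rerouting through all of $K\cap R$, and a missing edge inside $G[V(P)]$ would make some vertex of $P$ non-good, both beaten by the path through $K$), whereas you first deduce that every vertex of $P$ is good, introduce the alternative optimal path $P^{*}$ through $K$, and apply the Claim~\ref{newcc} mechanism to $P^{*}$ to bound each $c_i$'s neighbours in $K$ by two before cleaning up with Claim~\ref{newcc} on $P$. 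Your route is slightly longer but sound (you correctly note that the independence argument only needs a path attaining the maximum and a good vertex off it); the paper's version of this case is a bit more economical since it never needs the degree bound on the $c_i$'s.
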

\begin{proof}
Suppose to the contrary that there exists some vertex in $K$ that is not contained in $P$. Let $\omega_1=|K\cap R|$ and $\omega_2=|K\cap V(P)|$. Evidently, $\omega_1+\omega_2=\omega$ with $\omega_1,\omega_2\ge 1$. Moreover, by Claim \ref{newc}, we have $\omega_2\ge 2$.
 
By the fact that $\omega\ge \frac{n+2}{2}$ and the assumption of $P$, for any two vertices of $K$ on $P$, the segment between these two vertices has at least $\omega_1$ vertices outside $K$. Then 
\begin{equation}\label{eqc1}
|V(P)|\ge \omega_1(\omega_2-1)+\omega_2
\end{equation}
with equality only if $u,v\in K$.
So
\begin{equation}\label{eqc2}
n\ge |V(P)|+|K\cap R|\ge \omega_1(\omega_2-1)+\omega_2+\omega_1\ge 2\omega-2\ge n.
\end{equation}
Therefore, $n$ is even and $\omega=\frac{n}{2}+1$. Moreover,
either $\omega_1=1$, $\omega_2=\omega-1$ or $\omega_1=\omega-2$, $\omega_2=2$. 

Suppose that $\omega_1=\omega-2$ and $\omega_2=2$. We have from Eq.~\eqref{eqc1} that $u,v\in K$ and $|V(P)|=\frac{n}{2}+1$. It then follows that $R=K\setminus \{u,v\}$. If there exists an edge between $V(P)\setminus \{u,v\}$ and $R$, then we can obtain a $(u,v)$-path containing more vertices of degree at least $n-\omega+1$, a contradiction. So there is no edge between $V(P)\setminus \{u,v\}$ and $R$. If $G[V(P)]$ is not a clique, then there is some vertex on $P$ of degree less than $n-\omega+1$ and so the $(u,v)$-path containing all vertices of $K$ contains more vertices of degree at least $n-\omega+1$, a contradiction. So $G[P]$ is a clique, this shows that $G\cong K_2\vee 2K_{\frac{n}{2}-1}$.

Suppose that $\omega_1=1$ and $\omega_2=\omega-1$. 
Denote by $x$ the unique vertex in $K\setminus V(P)$.
We have from Eq.~\eqref{eqc1} that $|V(P)|=2\omega_2-1$ and $u,v\in K$.
By Claim~\ref{newcc},
 $N_P^+(x)$ is independent and so $N_P^+(x)\cap K=\emptyset$ and $V(P)=N_P(x)\cup N_P^+(x)$. 
Also, by Eq.~\eqref{eqc2}, we have $R=K\setminus V(P)$ and hence $K=N_P(x)\cup \{x\}$. 
We claim that each vertex in $N_P^+(x)$ has degree at least $n-\omega+1$. To the contrary, suppose that $y\in N_P^+(x)$ has degree at most $n-\omega$. Then $uPy^-xy^+Pv$ is a $(u,v)$-path containing more vertices of degree at least $n-\omega+1$ in $G$, a contradiction.
Thus each vertex in $N_P^+(x)$ is adjacent to any vertex in $N_P(x)$ and so $G\cong K_{\frac{n}{2}}\vee \overline{K_{\frac{n}{2}}}$.
\end{proof}

If $G\cong K_{\frac{n}{2}}\vee \overline{K_{\frac{n}{2}}}$ or $G\cong K_2\vee 2K_{\frac{n}{2}-1}$, then $G$ does not contain a $(u,v)$-path satisfying the condition. Suppose in the following that $G\ncong K_{\frac{n}{2}}\vee \overline{K_{\frac{n}{2}}}$ and $G\ncong K_2\vee 2K_{\frac{n}{2}-1}$. 
By Claim~\ref{c2}, each vertex in $K$ is contained in $P$. 
Let $x$ be a vertex of degree at least $n-\omega+1$ outside $P$. 
   
Since $N_P^+(x)$ is independent by Claim~\ref{newcc}, $|N_P^+(x)\cap K|\le 1$ and so
\begin{align*}
n\ge |V(P)|+|N_R(x)|+|\{x\}|&\ge |K\setminus N_P^+(x)|+|N_P^+(x)|+|N_R(x)|+1\\
&\ge \omega-1+|N_P(x)|+|N_R(x)|\ge \omega-1+n-\omega+1=n.
\end{align*}
As the above inequality holds with equality, we have $|N_P^+(x)\cap K|= 1$, $V(P)=K\cup N_P^+(x)$, and $V(G)=V(P)\cup N_R(x)\cup \{x\}$. 
Moreover, $|N_P^+(x)|=d_P(x)-1$, which shows that $v\in N_P(x)$. Similarly, we have $|N_P^-(x)\cap K|= 1$, $V(P)=K\cup N_P^-(x)$, and $|N_P^-(x)|=d_P(x)-1$, which shows that $u\in N_P(x)$. 
Therefore, $N_P^+(x)\setminus K=N_P^-(x)\setminus K$ and so
\[
|N_P^+(x)\cap N_P^-(x)|=d_P(x)-1 \mbox{ or } |N_P^+(x)\cap N_P^-(x)|=d_P(x)-2.
\]
Denote by $x_1,\dots,x_t$ the neighbors of $x$ on $P$ in order with $x_1=u$ and $x_t=v$, where $t=d_P(x)$.

If $|N_P^+(x)\cap N_P^-(x)|=d_P(x)-1$, then $N_P^+(x)=N_P^-(x)$ and so $t=\omega-1$. This shows that $x_i^+=x_{i+1}^-$ for $i=1,\dots,\omega-2$. 
Since $\omega\ge \frac{n+2}{2}$, we have $\omega=\frac{n+2}{2}$ and $R=\{x\}$. 
If $d(x_i^+)<\frac{n}{2}$ for some $i=1,\dots,\frac{n}{2}-1$, then \[
uPx_ixx_{i+1}Pv
\]
is a $(u,v)$-path containing more vertices of degree at least $\frac{n}{2}$, a contradiction. So $d(x_i^+)\ge \frac{n}{2}$ for each $i=1,\dots,\frac{n}{2}-1$. Recall that $V(G)=V(P)\cup \{x\}=N_P(x)\cup N_P^+(x)\cup \{x\}$ and $N_P^+(x)$ is independent. We have $N(x_i^+)=N_P(x)$ and so $G\cong K_{\frac{n}{2}}\vee \overline{K_{\frac{n}{2}}}$, a contradiction.

Suppose next that $|N_P^+(x)\cap N_P^-(x)|=d_P(x)-2$. 

If $d_P(x)=2$, then as $d(x)\ge n-\omega+1$, we have $d_R(x)\ge n-\omega-1$. Note that there are at most $n-\omega$ vertices outside $P$ by Claim \ref{c2}. So $|R|=n-\omega$, $R=N_R(x)\cup \{x\}$ and $V(P)=K$. If there is some vertex $z\in N_R(x)$ adjacent to some vertex $z'\in K\setminus\{u,v\}$, then, with $z_1,\dots,z_{\omega-3}$ being vetices of $K\setminus \{u,v,z'\}$, $uxzz'z_1\dots z_{\omega-3}v$ is a $(u,v)$-path containing more vertices of degree at least $n-\omega+1$, a contradiction. So each vertex in $N_R(x)$ is not adjacent to any vertex in $K\setminus\{u,v\}$, that is, $N_P(z)\subseteq\{u,v\}$ for any $z\in N_R(x)$. This shows that $G\in \mathcal{H}(n,\omega)$.

Assume in the following that $t=d_P(x)\ge 3$.
Recall that $|N_P^+(x)\cap K|=1$, say $x_q^+\in K$ with $1\le q\le t-1$. Then $N_P^+(x)\setminus \{x_q^+\}=N_P^-(x)\setminus K\neq \emptyset$, which shows that $x_{i}^+=x_{i+1}^-$ for each $i=1,\dots,t-1$ with $i \neq q$ and each vertex in the segment $x_q^+\overrightarrow{P}x_{q+1}^-$ belongs to $K$. 
We claim that $d(x_i^+)\ge n-\omega+1$ for each $i=1,\dots,t-1$ with $i \neq q$, as otherwise, say $d(x_j^+)\le n-\omega$ for some $j=1,\dots,t-1$ with $j\ne q$, then 
\[
uPx_jxx_{j+1}Pv
\]
is a $(u,v)$-path with more vertices of degree at least $n-\omega+1$, a contradiction. 
If $N_R(x_j^+)\ne \emptyset$ for some $j=1,\dots,t-1$ with $j \neq q$, say $y\in N_R(x_j^+)$, then as $R=N_R(x)\cup \{x\}$, $xy\in E(G)$ and so \[
uPx_j^+yxx_{j+1}Pv
\]
is a $(u,v)$-path containing more vertices of degree at least $n-\omega+1$, a contradiction. So $N_R(x_i^+)=\emptyset$ for each $i=1,\dots,t-1$ with $i \neq q$.
If $N(x_j^+)\cap (K\setminus\{x_1,\dots,x_t\})\ne \emptyset$ for some $j=1,\dots,t-1$ with $j\ne q$, say $x_j^+z\in E(G)$ with $z\in K\setminus\{x_1,\dots,x_t\}$, then as $N_P^+(x)$ and $N_P^-(x)$ are independent, $z\ne x_i^+$ for $i=1,\dots,t-1$ with $i\ne q$ and so $z$ lies in the segment $x_q^+\overrightarrow{P}x_{q+1}^-$ with $z\ne x_q^+,x_{q+1}^-$. So $z^-,z^+\in K$. Recall that $x_q^+,x_{q+1}^-\in K$. Then either $j\le q-1$ or $j\ge q+1$ and so either
\[
uPx_j^+zPx_{q+1}^-z^-Px_{j+1}xx_{q+1}Pv
\]
or
\[
uPx_qxx_jPz^+x_q^+Pzx_j^+Pv
\]
is a $(u,v)$-path containing more vertices of degree at least $n-\omega+1$, a contradiction. 
Recall that $V(G)=V(P)\cup N_R(x)\cup \{x\}$ and $N_R(x_i^+)=\emptyset$ for $i=1,\dots,t-1$ with $i\ne q$. We have $N(x_i^+)\subseteq N_P(x)$ for $i=1,\dots,t-1$ with $i\ne q$. Recall that $d_P(x_i^+)\ge n-\omega+1$. So $t\ge n-\omega+1$. It then follows from 
\[
n\ge  |V(P)|+|\{x\}|=|K\setminus N_P^+(x)|+|N_P^+(x)|+1\ge \omega-1+t-1+1\ge n
\]
that $t=n-\omega+1$, $R=\{x\}$ and $N(x_i^+)=N(x)=\{x_1,\dots,x_t\}$. This shows that $G\cong K_{n-\omega+1}\vee (K_{2\omega-n-1}\cup \overline{K_{n-\omega}})$. 
\end{proof}

\section*{Acknowledgement}  The authors are grateful to Professor Xingzhi Zhan and Professor Bo Zhou for their constant support and guidance. The research of Li was supported by the NSFC grant 12271170 and Science and Technology Commission of Shanghai Municipality (STCSM) grant 22DZ2229014. 

\end{document}